\newcommand{\executeiffilenewer}[3]{%
 \ifnum\pdfstrcmp{\pdffilemoddate{#1}}%
 {\pdffilemoddate{#2}}>0%
 {\immediate\write18{#3}}\fi%
}
\newcommand{%
 \executeiffilenewer{.svg}{.pdf}%
 {inkscape -z -D --file=.svg %
 --export-pdf=.pdf --export-latex}%
 \input{.pdf_tex}%
}[1]{%
 \executeiffilenewer{#1.svg}{#1.pdf}%
 {inkscape -z -D --file=#1.svg %
 --export-pdf=#1.pdf --export-latex}%
 \input{#1.pdf_tex}%
}
\theoremstyle{definition} 
 \newtheorem{definition}{Definition}[section]
 \newtheorem{remark}[definition]{Remark}
 \newtheorem{example}[definition]{Example}
\theoremstyle{plain}      
 \newtheorem{proposition}[definition]{Proposition}
 \newtheorem{theorem}[definition]{Theorem}
 \newtheorem{lemma}[definition]{Lemma}
\newtheorem*{theorem*}{Theorem}
\newcommand{\R}{\mathbb{R}}
\newcommand{\C}{\mathbb{C}}
\newcommand{\Q}{\mathbb{Q}}
\newcommand{\N}{\mathbb{N}}
\newcommand{\Z}{\mathbb{Z}}
\DeclareMathOperator{\mcg}{Mod}
\DeclareMathOperator{\St}{St}
\DeclareMathOperator{\SL}{SL}
\DeclareMathOperator{\aff}{Aff}
\DeclareMathOperator{\id}{Id}
\DeclareMathOperator{\cl}{cl}
\DeclareMathOperator{\ocl}{ocl}
\DeclareMathOperator{\socl}{socl}
\DeclareMathOperator{\aut}{Aut}
\DeclareMathOperator{\ev}{ev}
\title[Overcommuting pairs in groups and 3-manifolds bounding them]{Overcommuting pairs in groups and \\ 3-manifolds bounding them}
\begin{document}
\date{}
\author{Livio Liechti} 
\address{Department of Mathematics, University of Fribourg, Ch. du Musée 23, 1700 Fribourg, Switzerland}
\email{livio.liechti@unifr.ch}

\author{Julien March\'e}
\address{Sorbonne Universit\'e, IMJ-PRG, 75252 Paris c\'edex 05, France}
\email{julien.marche@imj-prg.fr}

\begin{abstract}
We introduce the notions of overcommutation and overcommutation length in groups,
and show that these concepts are closely related to representations of the fundamental groups of 3-manifold and their Heegaard genus. 
We give many examples including translations in the affine group of the line and provide upper bounds for the 
overcommutation length in~$\SL_2$, related to the Steinberg relation. 
\end{abstract}

\maketitle
\section{Introduction}
We say that two commuting elements $g,h$ in a group $G$ overcommute if their lifts $\tilde{g},\tilde{h}$ in any central extension $\tilde{G}$ of $G$ still commute. It is equivalent to ask that some class associated to $(g,h)$ in $H_2(G,\Z)$ vanishes (see Section~\ref{overcommute}). By bordism arguments, it is also equivalent to ask that there exists a connected and oriented 3-manifold $M$ with torus boundary and a morphism $\rho:\pi_1(M)\to G$ mapping the generators of the fundamental group of the boundary to $g$ and $h$ respectively. We will say that $M$ is an overcommuting manifold for $(g,h)$. 

This latter point of view was explained to us by Ghys some years ago and proves the existence of 3-manifolds with specific properties, with the following paradigmatic example. 

Let $k$ be a field containing $\frac{1}{6}$. Results of Steinberg of the sixties imply that for any $x\in k\setminus\{0,1\}$, the matrices 
$$\begin{pmatrix} x & 0 \\ 0 & x^{-1} \end{pmatrix}\text{ and }\begin{pmatrix} (1-x) & 0 \\ 0 & (1-x)^{-1} \end{pmatrix}$$
overcommute in $\SL_2(k)$, see \cite{Hutchinson} Section 3 for a condensed exposition or Section~\ref{effectif} of this article. The reader familiar with $K$-theory will relate this fact to the Steinberg relation\footnote{More precisely $2\{x,1-x\}=0$} $\{x,1-x\}=0$. Taking for instance $k=\C(x)$, one gets a 3-manifold $M$ which may be seen as a topological counterpart of the Steinberg relation. Although we will not need it in this article, its defining property can be formulated in terms of the $A$-polynomial of $M$ by saying that it is divisible by $\mathcal{L}+\mathcal{M}-1$ in the notation of \cite{ccgls}. Computer experiments show that the complexity of the $A$-polynomials tends to grow very quickly and we could not find an $A$-polynomial with this property in the census of the 200 simplest 3-manifolds. In this article we will show how such a 3-manifold can be effectively constructed and prove that its Heegaard genus is less than 26. As far as we know, the motivation of Ghys was to provide a kind of ``topological proof" of the Steinberg relation, observing that all proofs of this fundamental relation in K-theory involve obscure computations. Clearly, we do not fulfil his expectations as we construct an obscure 3-manifold from a well-known and obscure proof of the Steinberg relation. In the last section, we provide extra motivations for finding a nice Ghys manifold.

Going back to the original problem, we can use the Hopf formula to reformulate the overcommutation in terms of a presentation $G=F/R$ where~$F$ is a free group. Two commuting elements $g,h$ in $G$ overcommute if and only if there are lifts $\tilde{g},\tilde{h}$ in~$F$ such that $[\tilde{g},\tilde{h}]\in [F,R]$. We define the overcommutation length of the pair~$(g,h)$ as the minimal number of commutators in~$[F,R]$ needed to write down such an expression and we denote it by~$\ocl(g,h)$. 
Surprisingly, this number does not depend on the presentation and may be interpreted as a complexity of the overcommuting pair~$(g,h)$ reminiscent of the commutator length, see for instance~\cite{scl}.
Our first result is the following: 

\begin{theorem}\label{T1}
Given two elements $g,h\in G$ that overcommute, $\ocl(g,h)+1$ is the minimal Heegaard genus of an overcommuting manifold for $(g,h)$.
\end{theorem}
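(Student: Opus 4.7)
The plan is to establish the equality by proving the two inequalities $\ocl(g,h)\leq\gamma-1$ and $\gamma\leq\ocl(g,h)+1$, where $\gamma$ denotes the minimal Heegaard genus of an overcommuting manifold.

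\textbf{Heegaard genus bounds $\ocl+1$.} Given an overcommuting manifold $M$ of Heegaard genus $\gamma$, I would use the Heegaard splitting to produce an explicit expression of $[\tilde g,\tilde h]$ as a product of $\gamma-1$ commutators in $[F,R]$. Dually, the splitting realizes $M$ as a handlebody $H$ of genus $\gamma$ with $\gamma-1$ two-handles attached along disjoint simple closed curves $c_1,\ldots,c_{\gamma-1}$ on $\partial H$, leaving $T^2$ as the remaining boundary. Setting $F=\pi_1(H)$ free of rank $\gamma$ and $r_i \in F$ a word for $c_i$ gives a presentation $\pi_1(M)=F/\langle\langle r_1,\ldots,r_{\gamma-1}\rangle\rangle$; composing with $\rho$ presents a quotient mapping to $G$. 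The torus generators $\mu,\lambda$ mapping to $g,h$ can be homotoped onto the once-punctured torus subsurface $T'\subset\partial H$ obtained by cutting $\partial H$ along the $c_i$; here $T'$ has genus $1$ and $2(\gamma-1)$ boundary circles, pairing up as $c_i^+,c_i^-$. In the free group $\pi_1(T')$, the commutator $[\mu,\lambda]$ equals the ordered product of boundary loops, where each $c_i^+,c_i^-$ pair enters with opposite orientation. Lifting to $F$, each such pair has the form $w_1 r_i w_1^{-1}\cdot w_2 r_i^{-1} w_2^{-1}$, which rewrites via $w_1 r_i w_1^{-1}\cdot w_2 r_i^{-1} w_2^{-1}=w_1[r_i,w_1^{-1}w_2]w_1^{-1}$ as a conjugate of a single commutator $[r_i,\cdot]$, hence itself a commutator $[r_i',f_i]$ with $r_i'$ conjugate to $r_i$. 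Collecting the $\gamma-1$ pairs yields $[\tilde\mu,\tilde\lambda]=\prod_{i=1}^{\gamma-1}[r_i',f_i]$, giving $\ocl(g,h)\leq\gamma-1$.

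\textbf{Construction of a manifold of Heegaard genus $\ocl+1$.} Conversely, given a presentation $G=F/R$ and an expression $[\tilde g,\tilde h]=\prod_{i=1}^n[f_i,r_i]$ with $r_i\in R$, I would reverse the process to build an overcommuting manifold of Heegaard genus $n+1$. The commutator identity gives a natural homomorphism $\phi:\pi_1(\Sigma_{n+1})\to F$ from the closed genus-$(n+1)$ surface group, presented with symplectic generators $\alpha_0,\beta_0,\ldots,\alpha_n,\beta_n$ and relation $\prod[\alpha_i,\beta_i]$, by sending $\alpha_0\mapsto\tilde h,\ \beta_0\mapsto\tilde g,\ \alpha_i\mapsto f_i,\ \beta_i\mapsto r_i$; the relator maps to $[\tilde g,\tilde h]^{-1}\prod[f_i,r_i]=1$ in $F$. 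Composing with $F\to G$ gives a representation sending $\alpha_0,\beta_0$ to $h,g$ and $\alpha_i,\beta_i$ to $\bar f_i,1$. I would realize $M$ as a genus-$(n+1)$ Heegaard splitting with Heegaard surface $\Sigma_{n+1}$: on the compression body side, take meridians $\beta_1,\ldots,\beta_n$, which lie in the kernel of $\pi_1(\Sigma_{n+1})\to G$ and compress $\Sigma_{n+1}$ to a torus with peripheral generators $\alpha_0,\beta_0$ mapping correctly to $(h,g)$. On the handlebody side, one needs $n+1$ disjoint simple closed curves in the kernel of the composition $\pi_1(\Sigma_{n+1})\to G$ that jointly cut $\Sigma_{n+1}$ into a single planar surface. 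Once such meridians are identified, van Kampen's theorem yields $\pi_1(M)\to G$ with the right boundary behavior.

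\textbf{The main obstacle.} The hard part is the construction of the handlebody side in the second direction. The compression body meridians are naturally provided by the $\beta_i$, but finding $n+1$ disjoint simple closed curves in $\ker\phi$ forming a valid Heegaard system is subtle: the obvious candidates (like the $\alpha_i$ for $i\geq 1$, together with a curve on the torus subsurface) generally fail because their $\phi$-images are the nontrivial $\bar f_i$ and $h^p g^q$. I expect this step to require either an explicit algebraic surgery on simple closed curves exploiting the cancellations inherent in the commutator expression, or an appeal to the action of $\mcg(\Sigma_{n+1})$ to place $\phi$ into a normal form adapted to a standard handlebody.
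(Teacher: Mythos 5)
Your Direction 1 (Heegaard genus bounds $\ocl+1$) is essentially the paper's argument in slightly different clothing. The paper works directly with the surface group relation $\prod_{i=1}^{k+1}[\tilde a_i,\tilde b_i]=1$ in $F$ after lifting $\pi_1(\overline H_{k+1})\to G$ to the free group $F$ of the given presentation and observing that $\tilde b_i\in R$ for $i\ge 2$; you cut the Heegaard surface along the compression curves and read the same commutator identity off the boundary relation of the cut surface. Both routes work, though yours glosses over the fact that the $c_i^{\pm}$ need not be adjacent in the product of boundary loops and that the image of $\pi_1(H)$ in $G$ need not be all of $G$, so one must invoke the presentation-independence of $\ocl$ (Remark 3.2) to transport the estimate to the given presentation $F/R$. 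These are minor and fixable.

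The genuine gap is in Direction 2, and you identify it honestly: you need to realize the morphism $\tilde\rho:\pi_1(\Sigma_{n+1})\to F$ as factoring through a genus-$(n+1)$ handlebody up to the action of $\mcg(\Sigma_{n+1})$. The second of your two guesses is exactly what the paper proves as Lemma~\ref{formenormale}: \emph{every} morphism from a closed surface group to a free group can be written as $f\circ i_*\circ\phi$, where $\phi$ is a mapping class, $i_*:\pi_1(\Sigma)\to\pi_1(H_{k+1})$ is the handlebody inclusion, and $f$ is a free group morphism. This is not a routine manipulation of $\ker(\pi_1(\Sigma)\to G)$ — it requires geometric input. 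The paper's proof uses a simplicial map to a graph representing $F$, transversality to produce a system of disjoint curves, a pants decomposition subordinate to these curves, the resulting handlebody filling, and finally the fact that any two handlebody fillings of $\Sigma$ are related by a mapping class. Without this lemma, your construction stops one step short: you have the right compression body side (meridians $\beta_1,\ldots,\beta_n$), but no argument that a complementary handlebody with meridians in $\ker(\pi_1(\Sigma)\to G)$ exists. Also worth noting: the lemma produces a handlebody of the \emph{same} genus $n+1$, which is exactly what is needed to match the claimed Heegaard genus; a weaker existence statement would not suffice.
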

Moreover, the proof is constructive in the sense that one can algorithmically produce a 3-manifold from an expression of $[\tilde{g},\tilde{h}]$ in $[F,R]$ and vice-versa. 

We then study in detail the case of the affine group of transformations of the form $z\mapsto az+b$ with $a\in k^*$ and $b\in k$. One can show that translations overcommute and one can even find a manifold which is overcommuting for all pairs of translations. However, these manifolds are not so easy to find. They have the following nice interpretation: 

\begin{proposition}\label{oclaffine}
A 3-manifold is overcommuting for translations if and only if there exists a morphism $\lambda:\pi_1(M)\to k^*$ (the linear part) mapping the boundary to 1 and such that the natural map $H_1(\partial M,k_\lambda)\to H_1(M,k_\lambda)$ vanishes. 

The minimal Heegaard genus of such manifolds is 3, and it is achieved by the complement of a knot in the 0-surgery over the stevedore knot $6_1$. 
\end{proposition}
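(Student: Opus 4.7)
The plan splits into two independent parts: the cohomological characterization, and the Heegaard genus bound.

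For the characterization, any homomorphism $\rho : \pi_1(M) \to \aff(k)$ decomposes as $\rho(\gamma) = (\lambda(\gamma), c(\gamma))$, where $\lambda : \pi_1(M) \to k^*$ is the linear part and $c$ is a crossed homomorphism valued in the twisted module $k_\lambda$ (so $c(\gamma_1 \gamma_2) = c(\gamma_1) + \lambda(\gamma_1) c(\gamma_2)$). That $\rho$ sends the boundary to translations amounts exactly to $\lambda|_{\partial M} = 1$; in that case $k_\lambda|_{\partial M}$ is canonically trivial and $c|_{\partial M} \in \homo(\pi_1(\partial M), k) = k^2$ records the two translation amounts. Thus $M$ is overcommuting for every pair of translations if and only if, for some admissible $\lambda$, the restriction map $Z^1(M; k_\lambda) \to Z^1(\partial M; k) = k^2$ is surjective. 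Since $\lambda|_{\partial M} = 1$, every coboundary $\gamma \mapsto (\lambda(\gamma)-1)s$ restricts trivially to $\partial M$, so this is equivalent to surjectivity of $H^1(M; k_\lambda) \to H^1(\partial M; k_\lambda)$. The long exact sequence of $(M, \partial M)$ then turns this into vanishing of the connecting map $H^1(\partial M; k_\lambda) \to H^2(M, \partial M; k_\lambda)$, which by Poincaré--Lefschetz duality is identified with the inclusion-induced map $H_1(\partial M; k_\lambda) \to H_1(M; k_\lambda)$.

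For the Heegaard genus assertion, the upper bound is obtained by explicit construction. The $0$-surgery $N$ on $6_1$ is a closed 3-manifold of Heegaard genus $2$ with $H_1(N) = \Z$, so it carries a $1$-parameter family of characters $\bar\lambda : \pi_1(N) \to k^*$. Removing a tubular neighborhood of a carefully chosen knot $K \subset N$ yields $M = N \setminus \nu(K)$, with torus boundary and Heegaard genus at most $3$; $\bar\lambda$ pulls back to a $\lambda$ on $\pi_1(M)$ trivial on $\partial M$, and the vanishing of $H_1(\partial M; k_\lambda) \to H_1(M; k_\lambda)$ can be verified by a Fox-calculus computation from a concrete presentation of $\pi_1(M)$.

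The lower bound proceeds by ruling out smaller Heegaard genera. Genus $1$ is immediate: the only manifold with torus boundary and Heegaard genus $1$ is the solid torus, where only $\lambda = 1$ is admissible and the inclusion-induced map on $H_1$ is manifestly non-zero. The main obstacle is excluding genus $2$. Here Theorem \ref{T1} reduces the question to an algebraic one: a genus-$2$ overcommuting manifold for a pair of translations $(g,h)$ amounts to $\ocl(g,h) \leq 1$, i.e., in some presentation $\aff(k) = F/R$ the commutator $[\tilde g, \tilde h]$ (which already lies in $R$) can be written as a single commutator $[f,r]$ with $f \in F$, $r \in R$. An algebraic argument, using either the relation-module structure of $\aff(k)$ or a direct obstruction valued in a suitable $H_2$, should show that no such expression exists for a generic pair of translations; this is the most delicate part of the proof.
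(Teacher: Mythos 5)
Your cohomological characterization is essentially the paper's own argument (Lemma~\ref{pd}): decompose $\rho$ into linear part $\lambda$ and a $k_\lambda$-valued crossed homomorphism, observe coboundaries die on $\partial M$ when $\lambda|_{\partial M}=1$, and pass through the long exact sequence of the pair and Poincar\'e--Lefschetz duality. This part is fine.

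The genuine gap is in the Heegaard genus lower bound, and you have correctly located it but not filled it. You reduce ``no genus-2 OCMT'' to ``$\ocl(g,h)\geq 2$ for generic translations'' via Theorem~\ref{T1} and then assert that ``an algebraic argument \dots should show that no such expression exists.'' That is not a proof, and going through $\ocl$ is precisely the direction in which lower bounds are hard; the whole point of the topological reformulation is to avoid having to estimate $\ocl$ from below algebraically. The paper instead exploits the geometry of a genus-2 Heegaard splitting directly: if $M = (H_2\setminus T)\cup\overline{H}_2$ is an OCMT with torus boundary, the attaching curve $\gamma\subset\Sigma_2$ is non-separating, and $\lambda$ being trivial on $\partial M$ forces $\lambda(\delta)=u^{\gamma\cdot\delta}$ on $\pi_1(\Sigma_2)$, so $\lambda$ factors through a nontrivial $\phi:\pi_1(M)\to\Z$. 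One can then pass to $\Lambda=k[t^{\pm1}]$ coefficients, where $H_1(H_2;\Lambda)\cong\Lambda$; the exact sequence of the pair $(M,H_2)$ makes $H_1(M;\Lambda)$ a quotient of $\Lambda$, hence cyclic. But Lemma~\ref{lambda} shows the Alexander module of an OCMT cannot be cyclic, a contradiction. Your proposal contains none of this; replacing the missing algebraic obstruction with this geometric argument is what closes the proof.

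A smaller issue: the upper bound is also only sketched. You say a ``carefully chosen knot $K\subset N$'' works and that the vanishing ``can be verified by Fox calculus,'' but the choice of $K$ is the content of the construction. The paper pins this down with explicit surgery diagrams, computes the equivariant linking matrix of $L\cup K$ in the unknot complement, reads off that the boundary classes land in $(t-2)H_1(M;\Lambda)$, and then reads the genus-3 Heegaard splitting off one of the surgery pictures. You should at least specify the knot and exhibit the Alexander-module computation; otherwise the genus-$\leq 3$ claim is unjustified.
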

Here, the notation $k_\lambda$ means the vector space $k$ with $\gamma\in \pi_1(M)$ acting by $\gamma.x=\lambda(\gamma)x$.  Notice that $\lambda$ has to be non trivial otherwise the statement would contradict Poincaré duality. In particular, $M$ cannot be a knot complement in~$S^3$. 

Finally, we study the case of $\SL_2(k)$. Our first task is to replace this group by its universal central extension $\St_2(K)$ where commutation is equivalent to overcommutation. The latter group was introduced by Steinberg in terms of a presentation $F/R$:  we denote by $S\subset F$ the set of relations defined by Steinberg which normally generate $R$. 
For $x\in R$, we also denote by $l_S(x)$ the minimal number of conjugates of elements of $S\cup S^{-1}$ needed to write~$x$. Contrary to $\ocl$, this number strongly depends on the presentation, but it is much easier to compute.

Our main result is then the following:

\begin{theorem}\label{T2} Let $k$ be a field containing $\frac{1}{6}$ and $\sqrt{2}$ and let $\St_2(k)=F/R$ be the standard presentation of the Steinberg group. For any $g,h\in F$ which commute in $\St_2(k)$, we have
$$\ocl(g,h)\le 5 l_S([g,h])+2.$$
\end{theorem}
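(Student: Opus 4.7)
Let $x = [g,h]$; since $g,h$ commute in $\St_2(k) = F/R$ we have $x\in R$, and since $\St_2(k)$ is its own universal central extension (so its Schur multiplier vanishes, equivalently $R\cap[F,F] = [F,R]$ by Hopf's formula), in fact $x\in[F,R]$. Fix an expression $x = \prod_{i=1}^n w_i s_i^{\epsilon_i} w_i^{-1}$ realizing $n = l_S(x)$, with $s_i\in S$ and $\epsilon_i=\pm 1$. The plan is to process this product in two stages: first strip off the conjugations, then deal with the unconjugated product of Steinberg relations.

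\textbf{Stripping conjugations.} Apply the identity $wsw^{-1} = [w,s]\cdot s$ to each factor. Each commutator $[w_i, s_i^{\epsilon_i}]$ lies in $[F,R]$, and since $R\triangleleft F$ implies $[F,R]\triangleleft F$, these commutators can be shuffled to one side: a factor of the form $s\cdot[w,r]$ rewrites as $[sws^{-1}, srs^{-1}]\cdot s$, still with the commutator in $[F,R]$. The outcome is an equality
\[
x \;=\; C\cdot\sigma, \qquad \sigma := s_1^{\epsilon_1}s_2^{\epsilon_2}\cdots s_n^{\epsilon_n},
\]
where $C$ is a product of exactly $n$ commutators in $[F,R]$. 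Combined with $x\in[F,R]$ this forces $\sigma\in[F,R]$ as well, and the problem reduces to bounding by $4n+2$ the number of commutators in $[F,R]$ needed to express $\sigma$.

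\textbf{Bounding $\sigma$.} The delicate point is that the individual Steinberg relations $s_i$ typically do \emph{not} lie in $[F,R]$: for instance, the additivity relation $x_+(a)x_+(b)x_+(a+b)^{-1}$ is visibly nontrivial in $F^{\mathrm{ab}}$. So the reduction has to be performed globally along $\sigma$. My plan is to exhibit, for each of the (finitely many) types of relations in the standard presentation of $\St_2$, an explicit "template" rewriting that, when inserted at the right position, absorbs that relation into a small number of commutators of $[F,R]$. The available algebraic tools are the Weyl element $w(a)=x_+(a)x_-(-a^{-1})x_+(a)$ and the consequences of the Steinberg symbol identity $\{x,1-x\}=1$ in its $2$-divisible form referenced in the introduction; the hypothesis $\frac{1}{6},\sqrt{2}\in k$ is there precisely to guarantee that the relevant cancellations can be realized over $k$.

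\textbf{Main obstacle.} The crux is the combinatorial accounting in the second stage. A naive implementation, handling each $s_i$ in isolation, spends on the order of six commutators per relation; to get the sharper count of four per relation with a fixed overhead of two, one must reuse commutator pieces across consecutive relations as $\sigma$ is processed from left to right, and one has to keep careful track of signs, conjugations, and the algebraic identities that make the intermediate pieces cancel. Turning this informal bookkeeping into a rigorous induction — and checking that the constant $4$ (rather than something larger) really can be achieved in every case — is where the technical content of the theorem lies.
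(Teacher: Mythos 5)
Your first stage is fine but is not the hard part, and your second stage is not a proof: it is a description of a strategy that you yourself acknowledge is unexecuted. The paper's actual engine is different and is worth spelling out. Because $H_1(\St_2(k),\Z)=0$, identity~(2) of Proposition~\ref{calculs} supplies the formula $x_\alpha(t)=[h_\alpha(a),x_\alpha(t/(a^2-1))]$, which the paper promotes to a morphism $\psi:F\to [F,F]$ on the free group. One then bounds $\cl_R(\psi(r))\le 5$ for each Steinberg relation $r\in S$ by explicit computation (the additivity relation is handled via Lemma~\ref{steinbergtranslations}, which in turn invokes the genus-$3$ OCMT built in Section~\ref{OCMT}; the Weyl-element relation uses Hall--Witt plus centrality of $c(u,v)$). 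With this in hand, applying $\psi$ to $x=\prod_j h_j r_j^{\pm1}h_j^{-1}$ gives $\cl_R(\psi(x))\le 5\,l_S(x)$, and applying $\psi$ to $x=\prod_i[f_i,g_i]$, together with the observation that $\psi(g)=g\cdot r(g)$ for $r(g)\in R$, shows $\psi(x)x^{-1}$ is a product of $2\cl(x)$ commutators in $[F,R]$. Combining yields $\cl_R(x)\le 5\,l_S(x)+2\cl(x)$, hence the theorem since $\cl([g,h])\le 1$.

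The genuine gap in your proposal is the second stage. You reduce to bounding $\cl_R(\sigma)$ for the unconjugated product $\sigma=s_1^{\epsilon_1}\cdots s_n^{\epsilon_n}\in[F,R]$, but this is not a simplification: it is a problem of exactly the same type as the one you started with, and it is arguably harder because you have lost all control on $\cl(\sigma)$. The paper's bound has the shape $\cl_R\le 5\,l_S+2\,\cl$, and the $+2$ in Theorem~\ref{T2} comes precisely from $\cl([g,h])\le 1$; your $\sigma$ enjoys no such bound. Moreover, your ``templates'' would have to treat relations in isolation, which you correctly note cannot work since individual Steinberg relations are not in $[F,R]$ — but you offer no mechanism to get around this. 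That mechanism, in the paper, is precisely the morphism $\psi$: it converts each relation into an element of $[F,F]$ whose $\cl_R$ \emph{can} be bounded locally, and it interacts functorially with both decompositions of $x$. Without something like $\psi$, I do not see how your stage-two bookkeeping can close.
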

Said informally, the overcommutation length $\ocl(g,h)$ is controlled by the number of relations needed to prove that $g$ and $h$ commute in $\St_2(k)$. This contraction property looks non trivial and is shared by most $1$-relator groups (see for instance Example \ref{one-relator}).

To end this introduction, we observe that one can define a simplicial volume $||[g,h]||$ of an overcommuting pair in the following way. Let $BG$ be a classifying space for $G$ and let $f:S^1\times S^1\to BG$ be a continuous map such that $f_*$ maps the generators of $\pi_1(S^1\times S^1)$ to $g$ and $h$ respectively. For $\epsilon>0$, consider the set $X_\epsilon$ of singular 2-cycles $x=\sum_i \lambda_i \sigma_i\in Z_2(S^1\times S^1,\R)$ representing the fundamental class $[S^1\times S^1]$ and such that $\sum_i |\lambda_i|\le \epsilon$. We define 
$$||f||_\epsilon=\inf\Big\{\sum_j |\mu_j|, y=\sum \mu_j\sigma_j\in C_3(BG,\R), \partial y\in f_*X_\epsilon\Big\}$$
and set $||[g,h]||=\liminf_{\epsilon\to 0} ||f||_\epsilon$. This definition is directly inspired from \cite{thurston}, Chap.~6. It is easy to show that this is well-defined and that if $G=\pi_1(M)$ and~$g,h$ are the generators of the fundamental group of the boundary, then $||[g,h]||$ coincides with the simplicial volume of $M$.  

Moreover, the following immediate proposition shows that the simplicial volume is a lower bound for the complexity of any overcommuting manifold.
\begin{proposition}\label{volume}
Let $g,h\in G$ be an overcommuting pair. For any overcommuting manifold $M$ for $(g,h)$ we have $$||[g,h]||\le ||M||.$$
\end{proposition}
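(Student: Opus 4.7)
Let $M$ be an overcommuting manifold with morphism $\rho\colon\pi_1(M)\to G$ sending the peripheral generators to $g,h$, and let $\phi\colon M\to BG$ be a classifying map inducing $\rho$, arranged so that $\phi|_{\partial M}=f$ up to homotopy. The plan is simply to pushforward the relative fundamental cycle of $M$ along $\phi$, controlling both the bulk $\ell^1$-norm and the boundary $\ell^1$-norm.

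Fix $\epsilon>0$. By the Gromov--Thurston equivalence for manifolds with amenable boundary (applied to $M$, whose torus boundary has amenable fundamental group $\Z^2$), we may choose a 3-chain $y\in C_3(M,\R)$ representing the relative fundamental class $[M,\partial M]$ such that $||y||_1\le||M||+\epsilon$ and such that $x:=\partial y$ is a 2-cycle on $\partial M$ representing $[T^2]$ with $||x||_1\le\epsilon$; so $x\in X_\epsilon$ after identifying $\partial M$ with $S^1\times S^1$. Pushing forward, $\phi_*y\in C_3(BG,\R)$ satisfies $\partial(\phi_*y)=f_*x$ and $||\phi_*y||_1\le||y||_1\le||M||+\epsilon$. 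By definition this gives $||f||_\epsilon\le||M||+\epsilon$, so letting $\epsilon\to 0$ we conclude $||[g,h]||=\liminf_{\epsilon\to 0}||f||_\epsilon\le||M||$.

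The only nontrivial input is the ability to represent $[M,\partial M]$ by chains with arbitrarily small boundary $\ell^1$-norm, which is in fact the same identification already used implicitly to match $||[g,h]||$ with $||M||$ in the tautological case $G=\pi_1(M)$. It follows from amenability of the torus and can be established by starting from a near-optimal representative $y_0$ of $[M,\partial M]$, subtracting a 3-chain on $\partial M$ that cancels $\partial y_0$ up to a very small cycle for $[T^2]$, and invoking the vanishing of $||\cdot||_1$ on $H_*(T^2;\R)$ together with a bounded filling argument valid on amenable spaces. This bounded filling is the main technical step; the rest of the proof is a purely functorial pushforward.
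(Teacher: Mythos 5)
The paper gives no proof of this proposition, simply calling it ``immediate'' after noting that the definition of $||[g,h]||$ is modelled on Thurston's treatment of simplicial volume for manifolds with boundary. Your argument is the natural one and is correct: push forward a near-optimal relative fundamental cycle of $(M,\partial M)$ with small boundary norm along a classifying map inducing $\rho$, and use that $\phi_*$ does not increase $\ell^1$-norm. You have correctly isolated the one nontrivial ingredient, namely the Gromov--Thurston equivalence theorem for manifolds with amenable boundary (here $\pi_1(\partial M)=\Z^2$), which guarantees a relative fundamental cycle $y$ with $||y||_1\le||M||+\epsilon$ and $||\partial y||_1\le\epsilon$ simultaneously. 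Two small points worth tightening: (1) instead of ``$\phi|_{\partial M}=f$ up to homotopy,'' you can and should arrange $\phi|_{\partial M}=f$ on the nose, by first fixing $f$ on $\partial M\cong S^1\times S^1$ and then extending over $M$ via obstruction theory into the aspherical space $BG$; this is what makes $\partial(\phi_*y)=f_*(\partial y)\in f_*X_\epsilon$ literally true rather than up to homotopy. (2) Your closing sketch of the equivalence theorem is too optimistic as written: subtracting a 3-chain $w\subset\partial M$ to shrink $\partial y_0$ gives no a priori bound on $||w||_1$, and controlling it is precisely the content of the amenable bounded-filling/diffusion argument; since you explicitly invoke that argument, the proof is fine, but the sketch should not suggest the step is elementary.
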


For the example of two translations in the affine group, we prove the following proposition in Section~\ref{OCMT}.

\begin{proposition}\label{vol_OCMT}
Let~$k$ be a field containing~$\frac{1}{6}$. If $t_1$ and $t_2$ are two translations in the affine group $\aff(k)$, then $||[t_1,t_2]||=0$. 
\end{proposition}
However, we have more questions than answers concerning this simplicial volume.
Natural questions include: is the simplicial volume of Ghys' example positive? Or does there always exists a 3-manifold where the inequality of Proposition~\ref{volume} is an equality? Having little to say about this for the moment, this topic will not be expanded further in this article.
 \medskip

{\bf Plan of the paper.} In Section 2, we define carefully the notion of overcommutation and give many examples. In Section 3, we define the overcommutation length and prove Theorem \ref{T1}. Section 4 is devoted to the case of the affine group and Section 5 to $\SL_2(k)$, where we prove Theorem~\ref{T2}.  
 
 {\bf Acknowledgements.} We would like to thank \'E. Ghys for asking and re-asking about the mysterious Steinberg manifold and J. Barge for his interesting remarks. 
We also thank an anonymous referee for helpful comments and suggestions. The first author was partially supported by the Swiss National Science Foundation (grant nr.\ 175260)
 
\section{Overcommuting pairs}\label{overcommute}
In a group~$G$, we write~$[g,h]=ghg^{-1}h^{-1}$ for~$g,h\in G$. We fix a classifying space~$BG$ and recall that for any connected CW-complex~$X$, homotopy classes of maps~$f:X\to BG$ are in bijection with morphisms~$f_*:\pi_1(X)\to G$. 

Hence, topologically, a group element~$g\in G$ lies in the commutator subgroup~$[G,G]$ exactly if there exists a compact orientable surface~$S$ with one boundary component and a continuous map from~$S$ to~$BG$ so that the boundary~$\partial S$ is mapped to the loop corresponding to~$g$. 

We provide an analogous criterion for the torus in~$BG$ defined by a pair~$(g,h)$ of commuting elements~$g,h\in G$ to bound a compact orientable~3-manifold~$M$ with toric boundary. Precisely, the map~$\phi_*:\Z^2\to G$ given by~$\phi_*(m,n)=g^mh^n$ corresponds to a continuous map~$\phi:S^1\times S^1\to BG$. 
Bounding means there exists a continuous map~$\Phi:M\to BG$ which extends~$\phi:\partial M=S^1\times S^1\to BG$.  
The following proposition is a combination of well-known arguments. 

\begin{proposition}\label{equivalence}
Let $G$ be a group and $g,h$ be two commuting elements of~$G$. The following assertions are equivalent. 
\begin{enumerate}
\item[(i)] In any central extension~$1\to Z\to \tilde{G}\to G\to 1$, any lifts~$\tilde{g},\tilde{h}\in\tilde{G}$ of~$g$ and~$h$ respectively commute.
\item[(ii)] Define the morphism~$\phi_*:\Z^2\to G$ by~$\phi(m,n)=g^mh^n$. Then the map $$H_2(\Z^2,\Z)\to H_2(G,\Z)$$ induced by $\phi_*$ vanishes.
\item[(iii)] Given an extension $1\to R\to F\to G\to 1$ where $F$ is a free group and lifts $\tilde{g},\tilde{h}\in F$ of $g$ and $h$ respectively, we have $[\tilde{g},\tilde{h}]\in [F,R]$. 
\item[(iv)] There exists a compact orientable 3-manifold $M$ with $\partial M=S^1\times S^1$ and a representation $\rho:\pi_1(M)\to G$ such that 
$$g=\rho(m)\text{ and }h=\rho(l)$$
 where $m$ and $l$ are the homotopy classes of $S^1\times\{1\}$ and $\{1\}\times S^1$ respectively.
\end{enumerate}
\end{proposition}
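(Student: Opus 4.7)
The plan is to treat each of (i)--(iv) as an equivalent reformulation of the vanishing of a single ``commutator class'' $c(g,h) \in H_2(G,\Z)$, defined as the image of the generator of $H_2(\Z^2,\Z) \cong \Z$ under the map induced by $\phi_*$. Condition (ii) is then the statement that $c(g,h) = 0$, and the rest of the work is to express (i), (iii), (iv) in the same terms via, respectively, the classification of central extensions, the Hopf formula, and low-dimensional oriented bordism.

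For (ii) $\Leftrightarrow$ (iii), I would apply the Hopf formula $H_2(G,\Z) \cong (R \cap [F,F])/[F,R]$ to the free presentation $G = F/R$. Since $g$ and $h$ commute, any lifts $\tilde g, \tilde h \in F$ satisfy $[\tilde g, \tilde h] \in R \cap [F,F]$; naturality of Hopf applied to the homomorphism $F(a,b) \to F$ with $a \mapsto \tilde g$, $b \mapsto \tilde h$ identifies $c(g,h)$ with the class of $[\tilde g, \tilde h]$ modulo $[F,R]$, using the presentation $\Z^2 = F(a,b)/\langle\langle[a,b]\rangle\rangle$ in which $[a,b]$ represents the generator of $H_2(\Z^2,\Z)$. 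Condition (iii) then says exactly $c(g,h) = 0$. For (i) $\Leftrightarrow$ (ii), I would argue that in any central extension $1 \to Z \to \tilde G \to G \to 1$ with class $\alpha \in H^2(G,Z)$, the commutator $[\tilde g, \tilde h] \in Z$ equals the value at $c(g,h)$ of the homomorphism $H_2(G,\Z) \to Z$ obtained as the image of $\alpha$ under the universal-coefficients surjection $H^2(G,Z) \twoheadrightarrow \homo(H_2(G,\Z), Z)$. This is seen by pulling back the extension along $\phi_*$ to $\Z^2$, where the edge morphism is already an isomorphism. Surjectivity of the edge morphism over $G$ then gives: $c(g,h) = 0$ iff commutators of lifts vanish in every central extension, i.e.\ (i).

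For (iv) $\Rightarrow$ (ii), classify $\rho$ by a map $\Phi \colon M \to BG$ extending $\phi$. The fundamental class $[\partial M] = [T^2]$ bounds inside $M$, so it maps to zero in $H_2(M,\Z)$ and hence, after pushing forward by $\Phi_*$, to zero in $H_2(BG,\Z) = H_2(G,\Z)$; this is exactly $c(g,h) = 0$. For (ii) $\Rightarrow$ (iv), I would invoke low-dimensional oriented bordism: since $\Omega_1^{SO} = \Omega_2^{SO} = 0$, the Atiyah--Hirzebruch spectral sequence yields an isomorphism $\Omega_2^{SO}(BG) \xrightarrow{\sim} H_2(BG,\Z)$, realized on a bordism class $[Y,\psi]$ by $\psi_*[Y]$. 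Therefore $[\phi \colon T^2 \to BG]$ is null-bordant iff $c(g,h) = 0$, and a null-bordism is precisely a compact oriented 3-manifold $M$ with $\partial M = T^2$ together with an extension $\Phi \colon M \to BG$ of $\phi$; the induced $\rho := \Phi_* \colon \pi_1(M) \to G$ realizes (iv).

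The key technical point, shared by both halves of the argument, is the ``commutator formula'' identifying $[\tilde g, \tilde h]$ (either in $F$ or in $\tilde G$) with $c(g,h)$ under the appropriate duality. Both instances are standard consequences of naturality in low-dimensional homological algebra, so the main obstacle is really careful bookkeeping rather than an unexpected difficulty; once the identification $c(g,h) = [\tilde g, \tilde h] \bmod [F,R]$ is in place, the four equivalences reduce to well-known surjectivity facts about edge morphisms together with the vanishing $\Omega_2^{SO} = 0$.
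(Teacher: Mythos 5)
Your proposal is correct, and its treatment of (ii)$\Leftrightarrow$(iii) (via the Hopf formula with $\phi_*([S^1\times S^1]) = [\tilde g,\tilde h] \bmod [F,R]$) and of (ii)$\Leftrightarrow$(iv) (via $\Omega_2^{SO}(BG)\cong H_2(BG;\Z)$) essentially coincides with the paper's argument. Where you genuinely diverge is the link to (i). You route (i)$\Leftrightarrow$(ii) through the cohomological classification of central extensions: pull $\alpha\in H^2(G,Z)$ back along $\phi_*$ to $\Z^2$, where the universal-coefficients edge map is an isomorphism, to identify $[\tilde g,\tilde h]\in Z$ with the evaluation of $\mathrm{edge}(\alpha)\in\homo(H_2(G,\Z),Z)$ on $c(g,h)$; then use surjectivity of the edge map (and a judicious choice of $Z$, e.g.\ $Z = H_2(G,\Z)$ with $\alpha$ lifting the identity) to detect a nonzero $c(g,h)$ by a suitable extension. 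This works, but is more machinery than the paper invokes: the paper instead proves (i)$\Rightarrow$(iii) by simply plugging the concrete central extension $1\to R/[F,R]\to F/[F,R]\to G\to 1$ into hypothesis (i), and proves (iii)$\Rightarrow$(i) by lifting $F\to \tilde G$ (possible since $F$ is free) and observing that all the commutators $[\Phi(f_i),\Phi(r_i)]$ die because $\Phi(r_i)\in Z$ is central. The paper's route is entirely group-theoretic (no $H^2$, no universal coefficients) and makes the role of the extension $F/[F,R]$ transparent, which is relevant later when the overcommutation length is defined in exactly these terms; your route is slicker in treating $c(g,h)\in H_2(G,\Z)$ as the single organizing invariant, at the cost of needing to argue that some extension detects it. Both are valid; just make sure, if you keep the cohomological version, to spell out the choice of $Z$ and the extension realizing a functional not vanishing on $c(g,h)$, since ``surjectivity of the edge morphism'' alone only says every functional is realized, not that a detecting functional exists.
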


Observe that in the Properties~(i) and~(iii), the condition does not depend on the chosen lifts. 

\begin{definition}
Let~$G$ be a group. If $g,h\in G$ satisfy the equivalent properties in Proposition~\ref{equivalence}, we say that they overcommute. 
A manifold $M$ satisfying Property (iv) of Proposition~\ref{equivalence} is called an overcommuting manifold for the pair~$(g,h)$. 
\end{definition}

\begin{proof}[Proof of Proposition~\ref{equivalence}]
The equivalence of~(ii) and~(iii) is the content of the following Hopf formula:~$H_2(G,\Z)=R\cap [F,F]/[F,R]$ and~$\phi_*([S^1\times S^1])=[\tilde{g},\tilde{h}]$, where~$\tilde{g}$ and~$\tilde{h}$ are any lifts of~$g$ and~$h$ in~$F$. 

To prove~(i)$\implies$(iii), observe that the sequence~$$1\to R/[F,R]\to F/[F,R]\to G\to 1$$ is a central extension. Hence given lifts~$\tilde{g},\tilde{h}$ of~$g$ and~$h$ in~$F$, we must have by property~(i) that~$[\tilde{g},\tilde{h}]$ vanishes in~$F/[F,R]$, hence the result.

Reciprocally, we observe that as~$F$ is free, there is a morphism~$\Phi:F\to \tilde G$ making the following diagram commutative:
$$\xymatrix{ 1\ar[r] & R \ar[r]\ar[d] & F\ar[r]\ar[d]^\Phi & G\ar[r]\ar[d]^{\id}& 1 \\ 1\ar[r] & Z \ar[r] & \tilde{G}\ar[r] & G\ar[r]& 1}$$
Take~$\tilde{g},\tilde{h}$ lifts of~$g,h$ in~$F$ so that~$\Phi(\tilde{g}),\Phi(\tilde{h})$ are lifts of~$g,h$ in~$\tilde{G}$. By property~(iii), one can write~$[\tilde{g},\tilde{h}]=\prod_{i=1}^k [f_i,r_i]^{n_i}$ for~$f_i\in F,r_i\in R$ and~$n_i\in \Z$. We get$[\Phi(\tilde{g}),\Phi(\tilde{h})]=\prod_{i=1}^k[\Phi(f_i),\Phi(r_i)]^{n_i}$. This vanishes because~$\Phi(r_i)\in Z$, which by assumption lies in the center of~$\tilde{G}$. 
 
To prove~(iv)$\implies$(ii), we recall that $H_2(\Z^2,\Z)=H_2(S^1\times S^1,\Z)$ is generated by the fundamental class $[S^1\times S^1]$ and that the representation~$\rho:\pi_1(M)\to G$ is induced by a map~$\Phi:M\to BG$ so that we have~$\Phi_*([S^1\times S^1])=\Phi_*([\partial M])$. As we can write~$[\partial M]=\partial z$ where~$z\in C_3(M)$ represents the fundamental class of~$M$ relative to the boundary, we get $\Phi_*([S^1\times S^1]=\partial \Phi_*z=0\in H_2(BG,\Z)$. 

Reciprocally, we can use bordism groups and observe~$\Omega_2(BG)=H_2(BG,\Z)=H_2(G,\Z)$. Hence the vanishing of~$\Phi_*([S^1\times S^1])$ implies the existence of a 3-manifold~$M$ with~$\partial M=S^1\times S^1$ and an extension~$\Phi:M\to BG$ of the boundary map~$\Phi:S^1\times S^1\to BG$. We will give below an alternative and more constructive proof in Theorem~\ref{construction}.
\end{proof}

By Proposition~\ref{equivalence}, examples of overcommuting pairs are given by the elements~$m,l\in \pi_1(M)$, where~$M$ is a compact oriented 3-manifold with toric boundary and~$m,l$ are generators of~$\pi_1(\partial M)$. A reformulation of the proposition states that these examples are universal in the sense that any other example is the homomorphic image of a topological one. One can also restrict to irreducible ones as any 3-manifold $M$ with torus boundary can be written $M=M'\#M''$ where $M'$ is closed and $M''$ is irreducible with torus boundary.

\begin{remark}
The group~$\SL_2(\Z)$ acts on overcommuting pairs by monomial transformations generated by~$(g,h)\mapsto (g,gh)$ and~$(g,h)\mapsto (gh,h)$. At the level of the overcommuting 3-manifold, it simply consists in reparametrizing the boundary torus. In the sequel, we will freely use this action.
\end{remark}

We end this section with some examples and constructions of overcommuting pairs~$(g,h)$. 

\begin{example}\label{surfacecroixcercle}
Suppose that~$g,h$ are two elements in a group~$G$ such that we have~$h=\prod_{i=1}^{n}[x_i,y_i]$, where~$x_1,\ldots,y_n$ commute with~$g$. Then~$g$ and~$h$ overcommute for the following topological reason: let~$\Sigma$ be a surface with genus~$n$ and 1 boundary component. One can define a morphism~$\pi_1(\Sigma\times S^1)\to G$ by sending the class of~$S^1$ to~$g$ and the standard generators of~$\pi_1(\Sigma)$ to~$x_1,y_1,\ldots,x_n,y_n$. The manifold~$\Sigma\times S^1$ has a toric boundary and is an overcommuting manifold for the pair~$(g,h)$. 
An explicit example is given by a pair of disjoint and non-separating Dehn twists on a surface of genus $g>3$. 
\end{example}

\begin{example}
\label{product_ex}
Suppose that~$g,h_1,h_2$ are three elements of~$G$ such that~$(g,h_1)$ and~$(g,h_2)$ are overcommuting pairs. Then, in any central extension~$\tilde G$ of~$G$ one has~$\tilde g \tilde h_1=\tilde h_1 \tilde g$ and~$\tilde g \tilde h_2=\tilde h_2 \tilde g$. In particular,~$[\tilde{g},\tilde{h}_1\tilde{h}_2]=1$ and~$(g,h_1h_2)$ is an overcommuting pair by Property~(i) of Proposition~\ref{equivalence}. This can be obtained topologically from two overcommuting manifolds~$M_1$ and~$M_2$ for~$(g,h_1)$ and~$(g,h_2)$, respectively, by gluing them along the annulus embedded in their boundary and mapping to $g$.   
In the case of knot complements, this operation is equivalent to the connected sum. 
\end{example}

\begin{example}
Let~$g,h$ be two elements of~$G$ and set~$g_n=h^n g h^{-n}$. If~$g$ and~$g_1$ commute, then~$g$ and~$g_1g_{-1}$ overcommute as the following proof shows. We choose a central extension~$\tilde G$ of~$G$ and lifts~$\tilde g$ and~$\tilde h$ of~$g$ and~$h$. We set~$\tilde{g}_n=\tilde h^n \tilde g\tilde h^{-n}$. 
By assumption, there exists~$z$ in the center of~$\tilde G$ such that~$\tilde g_1\tilde g=z\tilde g\tilde g_1$. Conjugating this equation by~$\tilde h^{-1}$, we get~$\tilde g\tilde g_{-1}=z\tilde g_{-1}\tilde g$ and the result follows. 
The topological counterpart of this computation is that the group~$\langle g,h | [g,hgh^{-1}]=1\rangle$ is the fundamental group of a 3-manifold with toric boundary, precisely the~0-surgery on one component of the Whitehead link (which is Seifert fibered). We observe also that as its abelianization is~$\Z^2$, this is not a knot complement in~$S^3$. 
\end{example}

\begin{example}
Let $c\in G$ be a central element. It is easy to show that the map $G\to H_2(G,\Z)$ mapping $g$ to the class of the commuting pair $(g,c)$ is a morphism  and hence defines a map $H_1(G,\Z)\to H_2(G,\Z)$. Elements in the kernel of this map give interesting overcommuting pairs. For instance if $G=\SL_2(\Z)$ we have $H_1(G,\Z)=\Z/12\Z$ and $H_2(G,\Z)=0$. Topologically, the fundamental group of the trefoil knot surjects to $\SL_2(\Z)$ and maps the longitude to the central element $-\id$. 
\end{example}
\begin{example}
Let $F_2$ be the group freely generated by two elements $u$ and~$v$ and let $w$ be an element of $F_2$. We consider the group $G_w=\langle u,v|r\rangle$ where $r=wuw^{-1}v^{-1}$. We also define $\phi\in \aut(F_2)$ by $\phi(u)=u^{-1}$ and $\phi(v)=v^{-1}$. 
\begin{definition}
Let us call the group $G_w$ a two-bridge group if there exists $g\in G$ such that $\phi(r)=gr^{-1}g^{-1}$. 
\end{definition}
As the notation suggests, the fundamental group of a two-bridge knot complement is a two-bridge group. 
In general, we observe that two-bridge groups have the following properties: 
\begin{enumerate}
\item $H_1(G_w,\Z)=\Z.$
\item $G_w$ is normally generated by $u$ (or $v$). 
\item $H_2(G_w,\Z)=0.$
\item The map $\phi$ induces an automorphism of $G_w$.
\item The elements $u$ and $l=\phi(w)^{-1}w$ commute (hence overcommute).
\end{enumerate}
It follows that there exists an overcommuting manifold~$M$ for~$(l,u)$, i.e.~a morphism~$\rho:\pi_1(M)\to G_w$. It looks interesting to understand better this map~$\rho$. For instance, does it define epimorphisms between 2-bridge knot groups as in \cite{ors}?
\end{example}

\section{Overcommutation length}
For an element~$g\in[G,G]$, the commutator length~$\cl(g)$ is the minimum number of commutators needed to write~$g$ as a product of commutators. Topologically,~$\cl(g)$ is the minimal genus among compact surfaces with one boundary component bounding~$g$ in a classifying space~$BG$. We define an analogue measure of the complexity of an overcommuting pair, the overcommutation length. 

\begin{definition}
\label{ocl_def}
Let~$(g,h)$ be an overcommuting pair of elements~$g,h\in G$, and let~$1\to R\to F\to G\to 1$ be a presentation of~$G$. 
We define the overcommutation length~$\ocl(\tilde g,\tilde h)$ of two lifts~$\tilde g$ and~$\tilde h$ of~$g$ and~$h$, respectively, to be 
$$\ocl(\tilde g,\tilde h)=\min\{k\in \N, [\tilde{g},\tilde{h}]=\prod_{i=1}^k [f_i,r_i]^{\pm 1}\in F\},$$
where $f_1,\ldots,f_k\in F$ and $r_1,\ldots,r_k\in R$.
We also set $\ocl(g,h)=\min \ocl(\tilde g, \tilde h)$ where the minimum is taken over all choices of lifts $\tilde g$ and $\tilde h$ of $g$ and $h$.
\end{definition}
\begin{remark}\label{biendef}We justify this definition by the following series of remarks:
\begin{enumerate}
\item 
By Property~(iii) of Proposition~\ref{equivalence}, the minimum in Definition~\ref{ocl_def} is finite.
\item 
If $\tilde{g}$ and $\tilde{h}$ are lifts of $g$ and $h$, any other lifts have the form $\tilde{g}r$ and $\tilde{h}s$ for $r,s\in R$. We compute 
$$[\tilde g r,\tilde h s]=\tilde g [r,\tilde h] \tilde{g}^{-1}[\tilde g,\tilde h]\tilde h [\tilde g r,s]\tilde h^{-1}=[\tilde g r \tilde g ^{-1},\tilde g\tilde h\tilde g^{-1}][\tilde g,\tilde h][\tilde h\tilde g r\tilde h^{-1},\tilde h s \tilde h ^{-1}]$$
and conclude that $\ocl(\tilde g r,\tilde h s)\le \ocl(\tilde g,\tilde h)+2$. In particular, the overcommutation length does not depend strongly on the lifts. 
\item It does actually depend on the lift as shown by the following example: take $a\in F\setminus R$ and $r\in R\setminus \{1\}$, then $[a,a^2]=1$ and $[ar,a^2]\ne 1$.
\item The overcommutation length does not depend on the presentation as if $G=F'/R'$, there is a morphism $\phi:F\to F'$ mapping $R$ to $R'$. 
Applying~$\phi$ to the identity $[\tilde{g},\tilde{h}]=\prod_{i=1}^k [f_i,r_i]^{\pm 1}$ shows $\ocl(\phi(\tilde g),\phi(\tilde h))\le \ocl(\tilde g,\tilde h)$ and the result follows. 
\end{enumerate}
\end{remark}

In what follows we prove that~$\ocl(g,h)+1$ equals the minimal Heegaard genus among all overcommuting manifolds for the pair~$(g,h)$. Before stating and proving this result, we recall the notion of Heegaard decompositions.

Let~$H_{k+1}$ be a standard handlebody of genus~$k+1\ge1$ in~$\R^3$, and furthermore let~$\Sigma=\partial H_{k+1}$. 
We fix standard generators~$a_1,b_1,\ldots,a_{k+1},b_{k+1}\in \pi_1(\Sigma)$ such that~$b_1,\ldots,b_{k+1}$ bound embedded discs in~$H_{k+1}$ and such that the relation~$[a_1,b_1]\cdots[a_{k+1},b_{k+1}]=1$ holds in~$\pi_1(\Sigma)$. We denote by~$T$ a standard solid torus in~$H_{k+1}$ such that the fundamental group of its boundary is generated by curves homotopic to~$a_1$ and~$b_1$ in~$H_{k+1}$. We will identify~$\partial T$ with the standard torus such that~$a_1$ and~$b_1$ correspond to~$m$ and~$l$, respectively.

\begin{definition} Let~$M$ be a compact oriented~$3$-manifold, with boundary $\partial M=S^1\times S^1$. 
A Heegaard decomposition of~$M$ of genus~$k+1\ge1$ is a homeomorphism~$$M\simeq(H_{k+1}\setminus T)\cup_\phi \overline{H}_{k+1}$$
where~$\phi\in \mcg(\Sigma)$ is an element of the mapping class group of~$\Sigma$ and~$\overline{H}_{k+1}$ denotes a copy of the handlebody~$H_{k+1}$ with opposite orientation.
The Heegaard genus of~$M$ is the minimal genus of a Heegaard decomposition of~$M$. 
\end{definition}

\begin{theorem}\label{construction}
Let~$G$ be a group and let~$(g,h)$ be a pair of overcommuting elements~$g,h\in G$. Then, $\ocl(g,h)+1$ is the minimal Heegaard genus among overcommuting manifolds~$M$ for the pair $(g,h)$. 
\end{theorem}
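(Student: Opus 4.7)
The theorem asserts that the minimum Heegaard genus of an overcommuting manifold for $(g,h)$ equals $\ocl(g,h)+1$. I would prove the two inequalities separately.

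For the lower bound, given a Heegaard decomposition $M = C_1 \cup_\Sigma \overline{H}_{k+1}$ of genus $k+1$, I choose a standard basis $\alpha_1,\beta_1,\ldots,\alpha_{k+1},\beta_{k+1}$ of $\pi_1(\Sigma_{k+1})$ in which $\alpha_1,\beta_1$ represent the boundary generators $m,l$ and $\beta_2,\ldots,\beta_{k+1}$ are meridians of $C_1$. The key observation is that $\pi_1(\overline{H}_{k+1})$ is free of rank $k+1$, so the composition $\pi_1(\overline{H}_{k+1}) \to \pi_1(M) \to G$ lifts to a homomorphism $\pi_1(\overline{H}_{k+1}) \to F$ for any presentation $G = F/R$. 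Pulling back via the inclusion $\pi_1(\Sigma) \hookrightarrow \pi_1(\overline{H}_{k+1})$ produces a lift of $\pi_1(\Sigma) \to G$ to $F$, under which the surface relator maps automatically to $1 \in F$ (since it is already trivial in $\pi_1(\Sigma)$). Using $\alpha_1 \to \tilde g$, $\beta_1 \to \tilde h$, and $\beta_j \to \tilde r_j \in R$ for $j \ge 2$ (because $\beta_j$ is a meridian of $C_1$, hence maps to $1 \in G$), the identity $[\tilde g,\tilde h]\prod_{j \ge 2}[\tilde f_j,\tilde r_j] = 1$ in $F$ rearranges to express $[\tilde g,\tilde h]$ as a product of $k$ commutators in $[F,R]$, giving $\ocl(g,h) \le k$.

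For the reverse inequality, given an expression $[\tilde g,\tilde h] = \prod_{j=1}^k[\tilde f_j,\tilde r_j]^{\epsilon_j}$ with $\tilde r_j \in R$ realizing $\ocl(g,h) = k$, I would construct an overcommuting manifold of Heegaard genus $k+1$ by reversing the preceding argument. On $\Sigma_{k+1}$ with standard basis, set up a representation with $\alpha_1,\beta_1 \to g,h$ and $\alpha_{j+1},\beta_{j+1}$ chosen (up to signs) so that the surface relator reproduces the inverse of the given expression in $F$, arranging that $\beta_{j+1} \to \tilde r_j \in R$. Take $C_1$ to be the compression body with meridians $\beta_2,\ldots,\beta_{k+1}$: this automatically carries the representation to $G$ and has the right torus boundary realizing $(g,h)$. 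What remains is to glue a handlebody $C_2$ of genus $k+1$ on the outside of $\Sigma$ via a cut system of $k+1$ disjoint simple closed curves, all mapping to $1 \in G$.

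The main obstacle is constructing this cut system for $C_2$: the naive choice of extending $\{\beta_2,\ldots,\beta_{k+1}\}$ by one additional curve fails in general, because curves in the first handle map to nontrivial products of $g,h$ and the dual $\alpha_j$'s map to the possibly nontrivial $\tilde f_j$'s. The construction will therefore require a more delicate cut system, obtained for instance by Dehn twisting the standard meridians along curves built from the relators $\tilde r_j$ so that the twisted curves lie in the kernel of the representation. The remark following the theorem, promising that the proof is constructive and algorithmically produces $M$ from the commutator expression, confirms that such a procedure can be made explicit.
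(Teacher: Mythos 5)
Your lower bound is essentially the paper's argument: lift $\pi_1(\overline{H}_{k+1}) \to G$ to $F$ using freeness, push through the surface relation, and read off the commutator identity. (Minor wording: $\pi_1(\Sigma) \to \pi_1(\overline{H}_{k+1})$ is a surjection you compose with, not an inclusion you pull back along, but the content is right.)

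The upper bound is where the real work is, and there you correctly set up the surface representation $\tilde\rho: \pi_1(\Sigma_{k+1}) \to F$ and the compression body $C_1$, and you correctly identify the obstacle: one needs a full cut system for the outer handlebody $C_2$ consisting of $k+1$ disjoint simple closed curves in $\Sigma$, complementary to a planar surface, each in the kernel of the resulting representation. But you stop there. Saying that ``the construction will require a more delicate cut system, obtained for instance by Dehn twisting\dots'' and then citing the paper's own remark about algorithmicity is exactly the step that needs a proof; as written, it is a conjecture, not an argument, and it is not obvious that the Dehn-twisting heuristic succeeds. The missing ingredient is a classical fact (the paper's Lemma~\ref{formenormale}): \emph{every} homomorphism $\rho: \pi_1(\Sigma) \to F$ into a free group factors as $f\circ i_*\circ\phi$, where $\phi$ is an orientation-preserving automorphism of $\pi_1(\Sigma)$, $i_*$ is the inclusion-induced map into a fixed handlebody, and $f$ is a homomorphism from that free group. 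This is a Stallings-style transversality argument: realize $F=\pi_1(X)$ for a graph $X$, make the map $\Sigma \to X$ simplicial, take the preimage of edge midpoints to get a system of disjoint curves, enlarge to a pants decomposition, and take the handlebody compressing those curves. Once you have the factorization, the outer handlebody is $\overline{H}_{k+1}$ glued in via a diffeomorphism inducing $\phi$, and the new meridians $\phi^{-1}(b_i)$ satisfy $\tilde\rho(\phi^{-1}(b_i)) = f(i_*(b_i)) = 1$ already in $F$, so the representation to $G$ extends over the closed-up manifold. Without this lemma (or an equivalent), the upper bound does not go through.
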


\begin{proof}
Let~$1\to R\to F\to G\to 1$ be any presentation of the group~$G$, and let~$M$ be an overcommuting manifold for~$g$ and~$h$ with minimal Heegaard genus~$k+1$. One can write~$M=(H_{k+1}\setminus T)\cup_\phi\overline{H}_{k+1}$ and the representation ~$\rho:\pi_1(M)\to G$ satisfies~$\rho(a_1)=g, \rho(b_1)=h,\rho(b_i)=1$ for~$1<i\le k+1$.

Set~$F_{k+1}=\pi_1(\overline{H}_{k+1})$ and observe that the inclusion~$\overline{H}_{k+1}\to M$ induces a surjection~$F_{k+1}\to \pi_1(M)$. As~$F_{k+1}$ is free, one can find a morphism~$\overline{\rho}$ making the following diagram commutative:
$$\xymatrix{ F_{k+1}\ar[r]^{\overline{\rho}}\ar[d]& F\ar[d]^p \\ \pi_1(M) \ar[r]^\rho& G.}$$
Consider now the composition~$\pi_1(\Sigma)\to \pi_1(\overline{H}_{k+1})=F_{k+1}\to F$ and denote the images of the generators by~$\tilde{a}_1,\tilde{b}_1,\ldots,\tilde{a}_{k+1},\tilde{b}_{k+1}$. 
By construction,~$g=p(\tilde{a}_1)$ and~$h=p(\tilde{b}_1)$. Furthermore, $p(\tilde{b}_i)=1$ and hence~$\tilde b_i\in R$ for~$1<i\le k+1$. 
The equality~$\prod_{i=1}^{k+1}[\tilde{a}_i,\tilde{b}_i]=1$ in~$F$ shows that one has the following identity which proves~$\ocl(g,h)\le k$: 
$$[\tilde{a}_1,\tilde{b}_1]=[\tilde{a}_{k+1},\tilde{b}_{k+1}]^{-1}\cdots[\tilde{a}_2,\tilde{b}_2]^{-1}.$$

Suppose now that we have a presentation~$1\to R\to F\to G\to 1$ and a formula~$[\tilde{g},\tilde{h}]=[f_2,r_2]^{\epsilon_2}\cdots [f_{k+1},r_{k+1}]^{\epsilon_{k+1}}$ which holds in~$F$. 
We recognize here the equation satisfied by the generators of a surface group of genus~$k+1$. Up to changing the order of the factors and to exchanging~$f_i$ and~$r_i$ we can rewrite it as $$[\tilde{g},\tilde{h}][f_2,r_2]\cdots [f_{k+1},r_{k+1}]=1.$$
Hence, let~$\Sigma$ be the closed orientable surface of genus~$k+1$, and define a morphism~$\tilde{\rho}:\pi_1(\Sigma)\to F$ by setting~$\tilde{\rho}(a_1)=\tilde{g},\tilde{\rho}(b_1)=\tilde{h}$ and~$\tilde{\rho}(a_i)=f_i,\tilde{\rho}(b_i)=r_i$ for~$1<i<k+1$. By Lemma~\ref{formenormale}, we may write~$\tilde{\rho}=f\circ i_*\circ \phi$, where~$\phi$ is an automorphism of~$\pi_1(\Sigma)$,~$i_*$ is induced by the inclusion~$\Sigma\to H_{k+1}$ and~$f:\pi_1(H_{k+1})=F_{k+1}\to F$ is a morphism.  Let~$\overline{\phi}:\Sigma\to \Sigma$ be a diffeomorphism of~$\Sigma$ fixing the base point and inducing the automorphism~$\phi$.
We set~$M=(H_{k+1}\setminus T)\cup_{\overline{\phi}}\overline{H}_{k+1}$. By our construction, the representation~$\tilde{\rho}:\pi_1(\Sigma)\to F$ induces a representation~$\rho:\pi_1(H_{k+1}\setminus T)\to G$. On the other hand, such a representation extends to~$\pi_1(M)$ if and only if~$\rho(\phi^{-1}(b_i))=1$ for all~$1\le i\le k+1$. In our case, this follows directly from the fact that~$i_*(b_i)=1$. 
We have shown that~$M$ is an overcommuting manifold for the pair~$(g,h)$ and has a Heegaard decomposition of genus $k+1$. This finally proves the theorem. 
\end{proof}

Let us prove now Lemma \ref{formenormale}. Although it is quite well-known (it is very similar to Lemma 3.2 in \cite{jaco} for instance), we include a proof for completeness.
\begin{lemma}\label{formenormale}
Let~$\Sigma$ be a surface bounding a standard handlebody~$H_{k+1}$ and let~$F$ be a free group. Then, any morphism~$\rho:\pi_1(\Sigma)\to F$ can be written as $\rho=f\circ i_*\circ \phi$, where~$\phi$ is an automorphism of~$\pi_1(\Sigma)$ preserving the orientation, $i_*:\pi_1(\Sigma)\to \pi_1(H_{k+1})$ is induced by the inclusion and~$f:\pi_1(H_{k+1})\to F$ is a morphism. 
\end{lemma}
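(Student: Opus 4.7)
The plan is as follows. The equation $\rho = f\circ i_*\circ\phi$ can be solved for $f$ if and only if $\ker(i_*\circ\phi)\subseteq\ker\rho$, equivalently $\phi^{-1}(b_j)\in\ker\rho$ for each meridian $b_j$, $j=1,\dots,k+1$. So the task reduces to producing an orientation-preserving automorphism $\phi$ of $\pi_1(\Sigma)$ sending the standard meridian system to elements of $\ker\rho$, or equivalently to exhibiting a meridian system (a tuple $(x_1,\dots,x_{k+1})$ that can be completed to a standard surface generating set) contained in $\ker\rho$.

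First I would replace $F$ by its image $F'=\rho(\pi_1(\Sigma))$, which is free as a subgroup of a free group; any factorization through $F'$ composes with the inclusion $F'\hookrightarrow F$ to yield one through $F$. The surjection $\pi_1(\Sigma)\twoheadrightarrow F'$ induces an injection $H^1(F',\Z)\hookrightarrow H^1(\Sigma,\Z)$, and since $F'$ is free, $H^2(F',\Z)=0$, so every cup product of classes pulled back from $F'$ vanishes. Thus the image is isotropic in $H^1(\Sigma,\Z)$ with respect to the intersection pairing, forcing the rank of $F'$ to be at most $k+1$.

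The key step is then to invoke the following classical fact: for $r\le g$, every surjection $\pi_1(\Sigma_g)\twoheadrightarrow F_r$ is equivalent, up to pre-composition with an element of $\aut^+(\pi_1(\Sigma_g))$ and post-composition with an automorphism of $F_r$, to the composition $\pi_1(\Sigma_g)\twoheadrightarrow\pi_1(H_g)\twoheadrightarrow F_r$ where the first arrow is the standard handlebody projection and the second is a standard quotient of free groups. Applied to $\rho:\pi_1(\Sigma)\twoheadrightarrow F'$ with $r=\mathrm{rank}(F')\le k+1$, this produces $\phi\in\aut^+(\pi_1(\Sigma))$ and $f:\pi_1(H_{k+1})\to F'\hookrightarrow F$ with $\rho\circ\phi^{-1}=f\circ i_*$, as required.

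The main obstacle is the classical fact invoked in the previous paragraph. I would prove it by realizing $\rho$ as a continuous map $\Phi:\Sigma\to\bigvee_r S^1\simeq K(F',1)$, putting it in general position with respect to the midpoints of the circles so that $C:=\Phi^{-1}(\mathrm{midpoints})$ is a disjoint union of simple closed curves, each representing a conjugacy class in $\ker\rho$ (being mapped to a single point). Standard Stallings-type simplifications---deleting nullhomotopic components, removing pairs of isotopic components cobounding trivially-mapped annuli, and collapsing non-disk complementary regions onto their spines---combined with the rank bound then produce a subfamily of $C$ forming a cut system on $\Sigma$. The change-of-coordinates principle from surface topology yields an orientation-preserving diffeomorphism of $\Sigma$ sending the standard meridians to this cut system, whose induced automorphism gives the required $\phi$.
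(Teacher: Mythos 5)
Your proposal is correct and takes essentially the same route as the paper: both realize $\rho$ as a map from $\Sigma$ to a graph (a wedge of circles for you, a general graph for the paper), pull back the midpoints of edges to obtain a system of disjoint simple closed curves in $\ker\rho$, and then use transitivity of the mapping class group on handlebody systems to produce $\phi$. Your preliminary reduction to the image $F'$ and the cohomological rank bound are harmless extra scaffolding; the only small imprecision is the claim that a \emph{subfamily} of $C$ forms a cut system --- when $\mathrm{rank}(F')<k+1$ one must \emph{supplement} $C$ with further curves lying inside the complementary pieces, which is legitimate because those pieces are mapped null-homotopically and hence the added curves also lie in $\ker\rho$ (the paper handles this by extending $h^{-1}(E)$ to a full pants decomposition).
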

\begin{proof}
One can suppose that~$F$ is isomorphic to~$\pi_1(X)$ for some graph~$X$. Moreover, one can find simplicial structures on~$\Sigma$ and~$X$ and a simplicial map $h:\Sigma\to X$ such that~$\rho=h_*$. Let~$E$ be the set of middles of the edges of~$X$. By transversality, ~$h^{-1}(E)$ is a collection of disjoint curves in~$\Sigma$. Hence, there exists a pants decomposition of~$\Sigma$ such that any connected component of~$h^{-1}(E)$ is parallel to a curve of the decomposition or homotopically trivial. Let~$H$ be a handlebody bounding~$\Sigma$ such that any component of the pants decomposition bounds a disc in~$H$. Our construction ensures that~$\rho$ factors through~$\pi_1(H)$ which is free. To conclude,  it remains to notice that any two handlebodies bounding~$\Sigma$ are related by an element of the mapping class group. 
\end{proof}

\begin{example}
In view of the overcommutation length, the simplest examples of overcommuting pairs~$(g,h)$ are those with~$\ocl(g,h) =0$. On one hand, they correspond to the unique 3-manifold~$M$ with toric boundary and Heegaard genus~$1$, which is the solid torus. On the other hand, considering a presentation $G=F/R$, they correspond to the case where the lifts $\tilde{g},\tilde{h}$ already commute in~$F$. This is only possible if they are powers of a same element in $F$, and hence in $G$. To sum up, we have the equivalence $$\ocl(g,h)=0\iff g=t^n\text{ and } h=t^m\text{ for some }t\in G\text{ and }n,m\in \Z.$$
\end{example}

Let us conclude this section with simple properties of the overcommutation length. 

\begin{proposition} Let $G$ be a group and $g,h$ be two overcommuting elements in $G$; 
\begin{enumerate}
\item For any morphism $\phi:G\to H$ we have $\ocl(\phi(g),\phi(h))\le \ocl(g,h)$. 
\item If $g$ overcommutes with $h_1,h_2\in G$ then $\ocl(g,h_1h_2)\le \ocl(g,h_1)+\ocl(g,h_2)+1$. 
\item The following stable overcommutation length is well-defined:
$$\socl(g,h)=\lim_{\min(m,n)\to\infty}\frac{\ocl(g^m,h^n)}{mn}.$$
\end{enumerate}
\end{proposition}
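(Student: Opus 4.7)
For part (1) I would extend the argument from Remark \ref{biendef}(4) to two distinct presentations. Fix presentations $G = F/R$ and $H = F'/R'$. Since $F$ is free, the composition $F \twoheadrightarrow G \xrightarrow{\phi} H$ lifts to a morphism $\tilde{\phi}: F \to F'$, which automatically satisfies $\tilde{\phi}(R) \subseteq R'$. For any lifts $\tilde{g}, \tilde{h} \in F$, the elements $\tilde{\phi}(\tilde{g})$ and $\tilde{\phi}(\tilde{h})$ are lifts of $\phi(g)$ and $\phi(h)$, and applying $\tilde{\phi}$ to an optimal identity $[\tilde{g}, \tilde{h}] = \prod_i [f_i, r_i]^{\pm 1}$ gives one of the same length for $[\tilde{\phi}(\tilde{g}), \tilde{\phi}(\tilde{h})]$ with $\tilde{\phi}(r_i) \in R'$. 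Taking infima over all choices of lifts then yields the desired inequality.

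For part (2), my plan is to paste together optimal expressions after aligning their lifts of $g$. Pick lifts $(\tilde{g}_1, \tilde{h}_1)$ and $(\tilde{g}_2, \tilde{h}_2)$ realizing $\ocl(g, h_1) = a$ and $\ocl(g, h_2) = b$, and write $\tilde{g}_1 = \tilde{g}_2 s$ with $s \in R$. The identity $[AB, C] = A[B, C]A^{-1}[A, C]$ yields
$$[\tilde{g}_1, \tilde{h}_2] = [\tilde{g}_2 s \tilde{g}_2^{-1}, \tilde{g}_2 \tilde{h}_2 \tilde{g}_2^{-1}] \cdot [\tilde{g}_2, \tilde{h}_2],$$
an expression of length $b + 1$ because $R$ is normal. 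Applying $[A, BC] = [A, B] \cdot B[A, C]B^{-1}$ to $[\tilde{g}_1, \tilde{h}_1 \tilde{h}_2]$ and conjugating the second factor by $\tilde{h}_1$ (again preserving the form, since $R$ is normal) produces an expression of total length $a + (b + 1)$, which bounds $\ocl(g, h_1 h_2)$ from above.

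For part (3), I would first observe that the symmetric variant $\ocl(g_1 g_2, h) \le \ocl(g_1, h) + \ocl(g_2, h) + 1$ follows from the same argument after exchanging the two factors of each commutator. Setting $a(m, n) = \ocl(g^m, h^n) + 1$, iterating both inequalities gives separate subadditivity: $a(m + m', n) \le a(m, n) + a(m', n)$ and $a(m, n + n') \le a(m, n) + a(m, n')$. Let $L = \inf_{m, n \ge 1} a(m, n)/(mn)$; the plan is to adapt Fekete's one-variable lemma. For $\epsilon > 0$, pick $(m_0, n_0)$ with $a(m_0, n_0) < (L + \epsilon) m_0 n_0$. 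Euclidean division $m = q m_0 + r$, $n = p n_0 + s$ with $0 \le r < m_0$, $0 \le s < n_0$, combined with iterated subadditivity, yields
$$a(m, n) \le qp \cdot a(m_0, n_0) + q \cdot a(m_0, s) + p \cdot a(r, n_0) + a(r, s).$$
Dividing by $mn$ and sending $\min(m, n) \to \infty$, the first term tends to $a(m_0, n_0)/(m_0 n_0) < L + \epsilon$, while the other three vanish since the $a$-values stay bounded (finite sets of $r, s$) and the prefactors decay like $1/n$, $1/m$, and $1/(mn)$ respectively. This matches the trivial lower bound $\inf \ge L$, and since $\ocl(g^m, h^n)/(mn)$ differs from $a(m, n)/(mn)$ by $1/(mn)$, both tend to $L$.

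The main obstacle is this last step: separate subadditivity in each coordinate is strictly weaker than joint subadditivity, so one must verify carefully that the remainder terms from the two Euclidean divisions all vanish as $\min(m, n) \to \infty$, rather than only along iterated limits $\lim_n \lim_m$. The algebra in parts (1) and (2) is routine commutator manipulation once one uses that conjugation preserves the normal subgroup $R$.
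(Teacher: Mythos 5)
Your approach matches the paper's for all three parts: part~(1) lifts $\phi$ to a free morphism as in Remark~\ref{biendef}(4), part~(2) uses exactly the identity $[\tilde g_1,\tilde h_1\tilde h_2]=[\tilde g_1,\tilde h_1]\,\tilde h_1[\tilde g_1,\tilde h_2]\tilde h_1^{-1}$ together with the lift-change estimate, and part~(3) fills in the ``multivariate Fekete's lemma'' that the paper invokes in one sentence. The worry you raise at the end is unfounded: your own bound $a(m,n)/(mn)\le a(m_0,n_0)/(m_0n_0)+C_1/(m_0 n)+C_2/(n_0 m)+C_3/(mn)$ already shows the remainder terms vanish whenever $\min(m,n)\to\infty$ (since that forces both $m$ and $n$ to $\infty$), so no iterated-limit issue arises.
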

\begin{proof}
(1) Take two presentations $G=F/R$ and $H=F'/R'$. As $F$ is free, there exists a map $\Phi:F\to F'$ inducing $\phi$. We conclude as in the item (4) of Remark~\ref{biendef}.

(2) Take a presentation $G=F/R$ and lifts $\tilde g_1,\tilde h_1$ (respectively $\tilde g_2,\tilde h_2$) minimizing the overcommutation length of $g,h_1$ (respectively $g,h_2$). 
As $[\tilde g_1,\tilde h_1\tilde h_2]=[\tilde g_1,\tilde h_1]\tilde h_1 [\tilde g_1,\tilde h_2]\tilde h_1^{-1}$, we get $\ocl(g,h_1h_2)\le \ocl(g,h_1)+\ocl(\tilde g_1,\tilde h_2)$. As $\tilde g_1$ and $\tilde g_2$ differ by an element of $R$, we conclude as in the item (2) of Remark \ref{biendef} that $\ocl(\tilde g_1,\tilde h_2)\le \ocl(g,h_2)+1$ and the result follows. 

(3) The existence of the limit follows from the subadditivity (with defect 1) in both variables by a multivariate Fekete's lemma. 
\end{proof}
The stable overcommutation length enjoys the same properties as the overcommutation length like monotonicity and subadditivity in both variables (when defined). 

\begin{example}
Let $M$ be a 3-manifold with torus boundary and consider a morphism $\phi:\pi_1(M)\to \Z$ mapping $m$ to $1$ and $l$ to $0$. We set $M_n$ to be the cyclic cover of $M$ corresponding to the subgroup $\phi^{-1}(n\Z)$. This is again a 3-manifold with torus boundary and generators of $\pi_1(\partial M_n)$ are $m^n$ and $l$. 
We conclude that in $\pi_1(M)$, $\ocl(m^n,l)$ is less than the Heegaard genus $g(M_n)$ of~$M_n$ and hence that $\socl(m,l)\le \liminf\limits_{n\to\infty} \frac{g(M_n)}{n}$. 

Taking for $M$ a fibered manifold over the circle, we observe that the genus of~$M_n$ is bounded from above, hence $\socl(m,l)=0$ in that case. In the same way, the pairs of Example \ref{surfacecroixcercle} have trivial $\socl$ as there is a self-covering of~$\Sigma\times S^1$ which has index $n$ on the boundary. 
\end{example}

\section{Overcommutation  in the affine group}\label{OCMT}

 \subsection{Translations overcommute}\label{affine}
 Let $k$ be a field containing $\frac{1}{6}$ and denote by $\aff(k)$ the group of affine transformations of $k$. This group fits into an exact sequence 
 $$1\to k\to \aff(k)\to k^*\to 1$$ where $t\in k$ maps to the translation $x(t):z\mapsto z+t$. This sequence is split as there is a section mapping $u\in k^*$ to the homothety $h(u):z\mapsto uz$. 
 
We are interested here in the overcommutation of $x(s)$ and $x(t)$ for $s,t\in k$ and in their overcommutation length. Let us first prove that these elements overcommute. For that, we consider the presentation 
\begin{eqnarray*}
\aff(k)=\langle x(t),h(u), t\in k, u\in k^*&|& x(t+s)=x(t)x(s), h(uv)=h(u)h(v), \\
&&h(u)x(t)h(u)^{-1}=x(ut)\rangle.
\end{eqnarray*}
 
 Given $s,t\in k$, we can consider $F(s,t)=[x(s),x(t)]\in H_2(\aff(k),\Z)$. This expression is bilinear in $s$ and $t$ and, by conjugation with $h(u)$, it satisfies $F(s,t)=F(us,ut)$. If $u$ is an integer, we get $(u^2-1)F(s,t)=0$. Applying this to $u=2,3$ we conclude that $F(s,t)=0$. 
 
The same argument would work if we replace $s$ and $t$ by formal variables, that is if we consider $x(s)$ and $x(t)$ as translations in $\aff(k[s,t])$ which is the group of transformations of the form $z\mapsto \lambda z+P$ for $\lambda\in k^*$ and $P\in k[s,t]$. This suggests the following definition:

\begin{definition}
Given a field $k$ containing $\frac{1}{6}$, we define the overcommutation length of translations as the constant $\ocl(x(s),x(t))$ where $x(s),x(t)\in \aff(k[s,t])$. 
An overcommuting manifold for this pair will be called an overcommuting manifold for translations (OCMT for short).  
 \end{definition}

The proof given above can be translated into topological terms as in the following proposition.
\begin{proposition}
There exists an OCMT which is obtained by gluing copies of three different manifolds of the form $P_i\times S^1$ along their boundaries, where $P_i$ are (genus 0) surfaces. 
\end{proposition}
We observe that this proposition implies Proposition \ref{vol_OCMT} because the simplicial volume is additive under gluing along tori and the simplicial volume of  $P_i\times S^1$ vanishes. 
\begin{proof}
The key point is to translate equalities into cobordisms: for instance the equality $F(s,t)=F(us,ut)$ can be viewed as a cobordism $S^1\times S^1\times [0,1]\to B \aff(k[s,t])$ mapping $\{1\}\times\{1\}\times\{0,1\}$ to the base point and the paths $S^1\times \{1\} \times \{0\},\{1\}\times S^1\times\{0\},\{1\}\times\{1\}\times [0,1]$ to paths representing $x(s),x(t),h(u)$ respectively. The boundary of this cobordism is $F(us,ut)-F(s,t)$, viewed in $\Omega^2(BG)=H_2(G,\Z)$. 
In the same way, the equality $F(s,nt)=nF(s,t)$ can be obtained as the boundary of a map $S^1\times P_n\to B\aff(k[s,t])$ where $P_n$ is a disc with $n$ holes. 
The circle component is sent to $x(s)$, the boundary of the innermost circles of $P_n$ are sent to $x(t)$ whereas the outermost circle is sent to $x(nt)$. 
Translate the equality $F(2s,2t)=4F(s,t)$: by gluing three $P_2\times S^1$, we bound 5 tori $F(s,t)$, one negative, four positive. Glue one positive side on one negative side to get a manifold with three tori $F(s,t)$ as boundary. 
Translate now the equality $F(3s,3t)=9F(s,t)$: by gluing four $P_3\times S^1$, we bound 10 tori $F(s,t)$, one positive, 9 negative. Gluing the positive one on a negative one, we get 8 negative $F(s,t)$. 
By taking three copies of the first construction, we get 9 positive $F(s,t)$ that we glue with the 8 negative ones of the second construction. This leaves one $F(s,t)$ left and we are done. 
\end{proof} 
Notice that we used exactly 13 $P_i\times S^1$ for this construction, which will probably not give the optimal overcommutation length. Hence in the next section, we look for another strategy to obtain it.

\subsection{OCMT and Poincaré duality with coefficients}

The purpose of this section is to translate the properties of an overcommuting manifold for translations in terms of twisted (co)-homology. For a morphism $\lambda:\pi_1(M)\to k^*$, we will denote by $k_\lambda$ the vector space $k$ with the action of $\pi_1(M)$ given by $\gamma. x=\lambda(\gamma)x$. 

\begin{lemma}\label{pd}
An irreducible manifold $M$ with torus boundary is an overcommuting manifold for translations if and only if there exists a morphism $\lambda:\pi_1(M)\to k^*$ mapping $\pi_1(\partial M)$ to 1 such that one of the following equivalent conditions is verified:
\begin{enumerate}
\item[(i)]The map $H^1( M,k_\lambda)\to H^1(\partial M,k_\lambda)$ is surjective.\\
\item[(ii)]The map $H_1( \partial M,k_\lambda)\to H_1(M,k_\lambda)$ vanishes.\\
\item[(iii)]The map $H_1( \partial M,k_{\lambda^{-1}})\to H_1(M,k_{\lambda^{-1}})$ is injective.
\end{enumerate}
\end{lemma}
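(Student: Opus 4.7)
The plan is to reformulate ``$M$ is an OCMT'' as a cocycle-extension problem, then translate that extension problem into cohomology and invoke Poincar\'e--Lefschetz duality and universal coefficients to pass between the three conditions. First, I unpack the definition. A representation $\rho:\pi_1(M)\to\aff(k[s,t])$ with $\rho(m)=x(s)$ and $\rho(l)=x(t)$ splits as $\rho(\gamma)(z)=\lambda(\gamma)z+b(\gamma)$, where $\lambda:\pi_1(M)\to k^*$ is a group homomorphism and $b\in Z^1(\pi_1(M),k[s,t]_\lambda)$ is a $1$-cocycle for the twisted action $\gamma\cdot P=\lambda(\gamma)P$. The boundary condition forces $\lambda|_{\pi_1(\partial M)}=1$, $b(m)=s$, and $b(l)=t$.

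Since $\lambda$ acts by scaling, the monomial decomposition $k[s,t]_\lambda=\bigoplus_{i,j\ge 0}k\cdot s^i t^j$ is $\pi_1(M)$-equivariant with each summand isomorphic to $k_\lambda$. Writing $b=\sum b_{i,j}s^i t^j$ with $b_{i,j}\in Z^1(\pi_1(M),k_\lambda)$, the boundary condition reads $b_{1,0}|_{\partial M}=\delta_m$, $b_{0,1}|_{\partial M}=\delta_l$ (the cocycles dual to $m$ and $l$), and $b_{i,j}|_{\partial M}=0$ otherwise, so I may take all other components to vanish. Hence $M$ is an OCMT with linear part $\lambda$ if and only if the cocycles $\delta_m$ and $\delta_l$ extend from $\partial M$ to cocycles on $\pi_1(M)$ with values in $k_\lambda$. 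Because $\lambda$ restricts trivially to $\partial M$, all $1$-coboundaries on $\partial M$ vanish, so $Z^1(\partial M,k_\lambda)=H^1(\partial M,k_\lambda)\cong k^2$ is spanned by $\delta_m,\delta_l$; moreover any global class lifting $[\delta_m]$ is represented by a global cocycle restricting exactly to $\delta_m$. This identifies the extension condition with surjectivity of the restriction $H^1(M,k_\lambda)\to H^1(\partial M,k_\lambda)$, which is condition~(i).

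For (i)$\Leftrightarrow$(ii) I apply the long exact cohomology sequence of the pair $(M,\partial M)$: surjectivity of restriction is equivalent to vanishing of the connecting map $H^1(\partial M,k_\lambda)\to H^2(M,\partial M;k_\lambda)$; Poincar\'e--Lefschetz duality gives $H^2(M,\partial M;k_\lambda)\cong H_1(M,k_\lambda)$ and Poincar\'e duality on the closed surface $\partial M$ gives $H^1(\partial M,k_\lambda)\cong H_1(\partial M,k_\lambda)$; under these identifications the connecting map becomes, up to sign, the inclusion-induced map of~(ii). For (i)$\Leftrightarrow$(iii), I use the universal-coefficient duality $H^1(X,k_\lambda)\cong H_1(X,k_{\lambda^{-1}})^*$, natural in $X$ for finite-dimensional local systems over a field: the restriction map in~(i) is then the transpose of the inclusion map in~(iii), so surjectivity of one is equivalent to injectivity of the other.

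The step I expect to be most delicate is the Poincar\'e--Lefschetz identification, specifically checking that the cohomology connecting homomorphism of the pair $(M,\partial M)$ really matches the inclusion-induced homology map $H_1(\partial M,k_\lambda)\to H_1(M,k_\lambda)$ under cap-product duality. This is a standard orientation-sensitive compatibility between cap products and boundary maps, but it requires care with the fundamental classes of $M$ and $\partial M$ and with sign conventions in the twisted setting.
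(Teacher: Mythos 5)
Your proof is correct and follows essentially the same route as the paper: decompose the affine representation into a character $\lambda$ and a $k[s,t]_\lambda$-valued cocycle, isolate the $s$- and $t$-coefficients as elements of $Z^1(\pi_1M,k_\lambda)$, and identify the OCMT condition with surjectivity of restriction on $H^1$, then pass between (i), (ii), (iii) via the exact sequence of the pair, Poincar\'e--Lefschetz duality and universal coefficients. The only difference is cosmetic: the paper packages the equivalence of (i)--(iii) in a single commutative $3\times 3$ diagram, whereas you treat (i)$\iff$(ii) and (i)$\iff$(iii) as two separate dualities.
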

\begin{proof}
The exact sequence of pairs, Poincaré duality and the universal coefficient theorem with twisted coefficients yield the following commutative diagram where lines are exact:
$$\xymatrix{H_2(M,\partial M,k_\lambda)\ar[r]\ar[d]^\sim& H_1(\partial M,k_\lambda)\ar[r]\ar[d]^\sim&H_1(M,k_\lambda)\ar[d]^\sim\\
H^1(M,k_\lambda)\ar[r]\ar[d]^{\sim}& H^1(\partial M,k_\lambda)\ar[r]\ar[d]^\sim& H^2(M,\partial M,k_\lambda)\ar[d]^\sim\\
H_1(M,k_{\lambda^{-1}})^*\ar[r]& H_1(\partial M,k_{\lambda^{-1}})^*\ar[r]&H_2(M,\partial M,k_{\lambda^{-1}})^*.}$$
One reads on it the fact that the Properties (i),(ii),(iii) are equivalent.

Take a manifold with torus boundary and a representation $\rho:\pi_1(M)\to \aff(k[s,t])$ mapping $m$ and $l$ to $x(s)$ and $x(t)$ respectively. Writing $\rho(\gamma):z\mapsto\lambda(\gamma)z+\sum_{i,j\ge 0}a_{i,j}(\gamma)s^it^j$ we observe that $\lambda:\pi_1(M)\to k^*$ is a morphism mapping $m$ and $l$ to 1 whereas each coefficient $a_{i,j}:\pi_1(M)\to k$ is a $\lambda$-cocycle, meaning that it satisfies 
$$a_{ij}(\gamma\delta)=a_{ij}(\gamma)+\lambda(\gamma)a_{ij}(\delta), \quad \forall \gamma,\delta\in \pi_1(M).$$
This shows that $a_{ij}$ may be viewed as an element of $H^1(\pi_1(M),k_{\lambda})$ which is isomorphic to $H^1(M,k_{\lambda})$ by the irreducibility assumption. 
We also have $a_{10}(m)=1,a_{10}(l)=0, a_{01}(m)=0,a_{01}(l)=1$. This shows that the class of $a_{01}$ and $a_{10}$ restrict to a basis of $H^1(\partial M,k_\lambda)\simeq H^1(\partial M,k)$, proving Property (i). 

Reciprocally, given $\lambda$ and two cocycles $a_{10},a_{01}\in Z^1(\pi_1(M),k_\lambda)$ restricting to the standard basis on the boundary, one can set $\rho(\gamma):z\mapsto \lambda z+ a_{10}(\gamma)s+a_{10}(\gamma)t$ and observe that it satisfies all the required properties. 
\end{proof}

\subsection{OCMT and Alexander modules}
The goal of this section is to exhibit an overcommuting manifold for translations of Heegaard genus 3. To this end, we restrict to a particular class of representations, where formulating the property of being an overcommuting manifold for translations becomes tangible. More precisely, we suppose in the sequel that $\lambda$ is the composition of a morphism $\phi:\pi_1(M)\to\Z$ vanishing on $\pi_1(\partial M)$ with the morphism $\ev_u:\Z\to k^*$ mapping $1$ to $u\in k^*$. This allows to translate the problem in terms of the $\Lambda$-module $H_1(M,\Lambda)$ where $\Lambda=k[t^{\pm 1}]$ and $\gamma\in \pi_1(M)$ acts by multiplication with $t^{\phi(\gamma)}$. 
\begin{lemma}\label{lambda}
The manifold $M$ is an OCMT for $\lambda=\ev_u\circ \phi$ if and only if the image of $H_1(\partial M,\Lambda)$ lies in $(t-u)H_1(M,\Lambda)$. Moreover we have then:
\begin{enumerate}
\item $H_1(M,\Lambda)$ is not cyclic.
\item For any Dehn filling $N$ of $M$, $H_1(N,\Lambda)/(t-u^{-1})H_1(N,\Lambda)\ne 0$. 
\end{enumerate}
\end{lemma}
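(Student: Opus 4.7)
The plan is to convert the OCMT condition into a statement about the Alexander module $H_1(M,\Lambda)$ modulo $t-u$ via the short exact sequence of $\Lambda$-modules
$$0\to \Lambda\xrightarrow{t-u}\Lambda\to k_\lambda\to 0,$$
and then to read off assertions (1) and (2) by a dimension count. The induced long exact sequences for $M$ and for $\partial M$ identify $H_1(X,k_\lambda)$ with $H_1(X,\Lambda)/(t-u)H_1(X,\Lambda)$ for $X\in\{M,\partial M\}$: the relevant connecting maps vanish because $H_0(M,\Lambda)=\Lambda/(t-1)$ and $H_0(\partial M,\Lambda)=\Lambda$ are both $(t-u)$-torsion free (which is exactly where the implicit hypothesis $u\ne 1$, equivalently $\lambda$ non-trivial, enters). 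Functoriality of the SES then turns the vanishing in Lemma~\ref{pd}(ii) into the inclusion stated in the lemma.

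For assertion (1), I would combine Lemma~\ref{pd}(i) with the elementary fact that $\dim_k H^i=\dim_k H_i$ over a field $k$ for a one-dimensional local system on a finite CW-complex. Since the restriction $H^1(M,k_\lambda)\to H^1(\partial M,k_\lambda)=k^2$ is surjective, $\dim_k H_1(M,k_\lambda)\ge 2$; but if $H_1(M,\Lambda)$ were cyclic as a $\Lambda$-module, then $H_1(M,k_\lambda)=H_1(M,\Lambda)/(t-u)H_1(M,\Lambda)$ would be cyclic over $\Lambda/(t-u)=k$ and hence at most one-dimensional, a contradiction.

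For assertion (2), I would apply Mayer--Vietoris to the decomposition $N=M\cup_{\partial M}V$ with $V$ the attached solid torus, using $k_{\lambda^{-1}}$ coefficients; the character $\lambda$ extends to $\pi_1(N)$ automatically since $\lambda|_{\partial M}=1$. By Lemma~\ref{pd}(iii) the map $k^2=H_1(\partial M,k_{\lambda^{-1}})\hookrightarrow H_1(M,k_{\lambda^{-1}})$ is injective, so the Mayer--Vietoris boundary into $H_1(M,k_{\lambda^{-1}})\oplus H_1(V,k_{\lambda^{-1}})$ has rank $2$; combined with the injectivity of the $H_0$-level connecting map (using that $\lambda^{-1}$ is non-trivial on $\pi_1(M)$), this yields
$$\dim_k H_1(N,k_{\lambda^{-1}})=\dim_k H_1(M,k_{\lambda^{-1}})-1\ge 1.$$
Reapplying the short exact sequence argument of the first paragraph with $u$ replaced by $u^{-1}$ identifies this quotient with $H_1(N,\Lambda)/(t-u^{-1})H_1(N,\Lambda)$, yielding the required non-vanishing.

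The main bookkeeping obstacle is checking that all connecting maps in these SES-induced long exact sequences vanish, so that the passage between $\Lambda$-coefficients and $k_\lambda$- or $k_{\lambda^{-1}}$-coefficients is clean; this is routine but uses the hypothesis $u\ne 1$ throughout.
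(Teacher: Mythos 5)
Your overall strategy matches the paper's: exploit the change-of-rings between $\Lambda$ and $k_\lambda=\Lambda/(t-u)$ to translate Lemma~\ref{pd}(ii) into the stated condition on Alexander modules, then deduce (1) and (2). The paper phrases this via the universal coefficient sequence $0\to H_1(X,\Lambda)\otimes k\to H_1(X,k_\lambda)\to\operatorname{Tor}(H_0(X,\Lambda),k)\to 0$ and a commutative ladder over $\partial M\to M$, which is literally the long exact sequence you use, packaged so that only the $\operatorname{Tor}$-vanishing for $\partial M$ (where $H_0(\partial M,\Lambda)=\Lambda$ is free, no hypothesis needed) is required; your version also needs to kill the $\operatorname{Tor}$ term for $M$ itself, which is where your ``$u\ne 1$'' remark enters. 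That is a small economy in the paper's favour but not a substantive difference.

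There is, however, a genuine slip in your proof of (1). The claim ``$\dim_k H^i=\dim_k H_i$ for a one-dimensional local system'' is not correct as stated: the natural pairing identifies $H^i(X,k_\lambda)$ with the dual of $H_i(X,k_{\lambda^{-1}})$, not of $H_i(X,k_\lambda)$ (this is exactly the UCT row of the diagram in the paper's proof of Lemma~\ref{pd}). So the surjectivity in Lemma~\ref{pd}(i) gives $\dim_k H_1(M,k_{\lambda^{-1}})\ge 2$, and hence that $H_1(M,\Lambda)/(t-u^{-1})H_1(M,\Lambda)$ has dimension at least $2$, not the quotient by $(t-u)$ as you write. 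The conclusion (non-cyclicity of $H_1(M,\Lambda)$) is unaffected, and the clean way to get it is simply to use Lemma~\ref{pd}(iii) directly: injectivity of $k^2=H_1(\partial M,k_{\lambda^{-1}})\hookrightarrow H_1(M,k_{\lambda^{-1}})$ combined with the change-of-rings identification at $u^{-1}$, which is precisely the paper's route. Your Mayer--Vietoris computation for (2) is a correct (and somewhat more explicit) fleshing out of the paper's one-line argument; just be aware that the passage from $H_1(N,k_{\lambda^{-1}})\ne 0$ to $H_1(N,\Lambda)/(t-u^{-1})H_1(N,\Lambda)\ne 0$ again uses the vanishing of a $\operatorname{Tor}$ term, which holds because $\lambda$ is non-trivial (so $u^d\ne 1$ where $d$ is the index of $\phi(\pi_1 N)$ in $\Z$), not merely because $u\ne 1$.
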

\begin{proof}

Considering $k$ as the module $\Lambda/(t-u)\Lambda$, one gets the following commutative diagram where lines correspond to the universal coefficient theorem:

$$\xymatrix{0\ar[r]&H_1(\partial M,\Lambda)\otimes k\ar[r]\ar[d]^\alpha& H_1(\partial M,k_{\lambda})\ar[r]\ar[d]^\beta& \operatorname{Tor}(H_0(\partial M,\Lambda),k)\ar[d]\ar[r]&0\\
0\ar[r]&H_1(M,\Lambda)\otimes k\ar[r]& H_1(M,k_{\lambda})\ar[r]& \operatorname{Tor}(H_0(M,\Lambda),k)\ar[r]&0.}$$

As $\phi$ is trivial on $\pi_1(\partial M)$, we have $H_*(\partial M,\Lambda)=H_*(\partial M,\Z)\otimes\Lambda$ and the upper right group vanishes. This shows that $\alpha$ vanishes if and only if  $\beta$ vanishes, which is equivalent to being an OCMT by Lemma \ref{pd}.  The equivalence follows. 
Let us consider now $k=\Lambda/(t-u^{-1})\Lambda$ and the same commutative diagram. By Lemma \ref{pd}, the map $\beta$ is injective implying that the map $\alpha$ is also injective.
Hence $H_1(M,\Lambda)\otimes k$ is at least 2-dimensional and the first property follows. 
For the second point, let $\gamma$ be the curve on $\partial M$ bounding a disc in $N$. By a Mayer-Vietoris argument we have that $H_1(N,\Lambda)=H_1(M)/\Lambda \gamma$ where $\gamma$ stands for the image of $\gamma$ in $H_1(M,\Lambda)$. Look at the previous commutative diagram again for $k=\Lambda/(t-u^{-1})\Lambda$. We observe that by the injectivity of $\beta$, the image of $\gamma$ in $H_1(M,\Lambda)\otimes k$ does not generate this group. Hence $H_1(N,k_{\lambda^{-1}})$ cannot be trivial and the conclusion follows. 
\end{proof}

This lemma says that if $M$ is the complement of a knot $K$ in a manifold $N$ where there exists $\phi:\pi_1(N)\to \Z$ such that $H_1(N,\Lambda)$ is a torsion module, then the Alexander polynomial $\Delta$ of $N$ should satisfy $\Delta(u^{-1})=0$ otherwise it would contradict Property (2) of Lemma \ref{lambda}. In particular $\Delta$ cannot be trivial. If $u$ and $u^{-1}$ were conjugated by an automorphism of $k$, the maps $H_1(\partial M,k_\lambda)\to H_1(M,k_\lambda)$ where $\lambda=\ev_u\circ\phi$ and $\lambda=\ev_{u^{-1}}\circ \phi$ should be isomorphic. As one is zero and the other is injective, this is impossible: this means that $\Delta$ has to be reducible. More precisely, one can write $\Delta=PQ$ with $P(u^{-1})=0$ and $P(u)\ne 0$. The simplest knot in $S^3$ satisfying this property is the stevedore knot $6_1$ with $u=2$. 

In Figure \ref{stevedore}, we give such a manifold by showing three surgery pictures of it. The first one shows that the equivariant linking matrix of the link $L\cup K$ in the complement of the unknot $U$ is $\begin{pmatrix} 2t-5+2t^{-1} & t-2 \\ t^{-1}-2 & 0\end{pmatrix}$. This proves that $H_1(M,\Lambda)=\Lambda m_L\oplus \Lambda m_K/ \Lambda l_L$ with $l_L=(2t-5+2t^{-1})m_L+(t^{-1}-2)m_K$ and $l_K=(t-2)m_L$. These expressions show that $m_K$ and $l_K$ are divisible by $(t-2)$, hence $M$ is an OCMT. 

The second figure is the nicest picture of $M$ whereas the third one shows that $M$ is the complement of a knot in the 0-surgery over Stevedore's knot. One can also deduce from the last picture that the Heegaard genus of $M$ is at most 3. 
\begin{figure}[htbp]
\begin{center}
 \def\svgwidth{12cm}
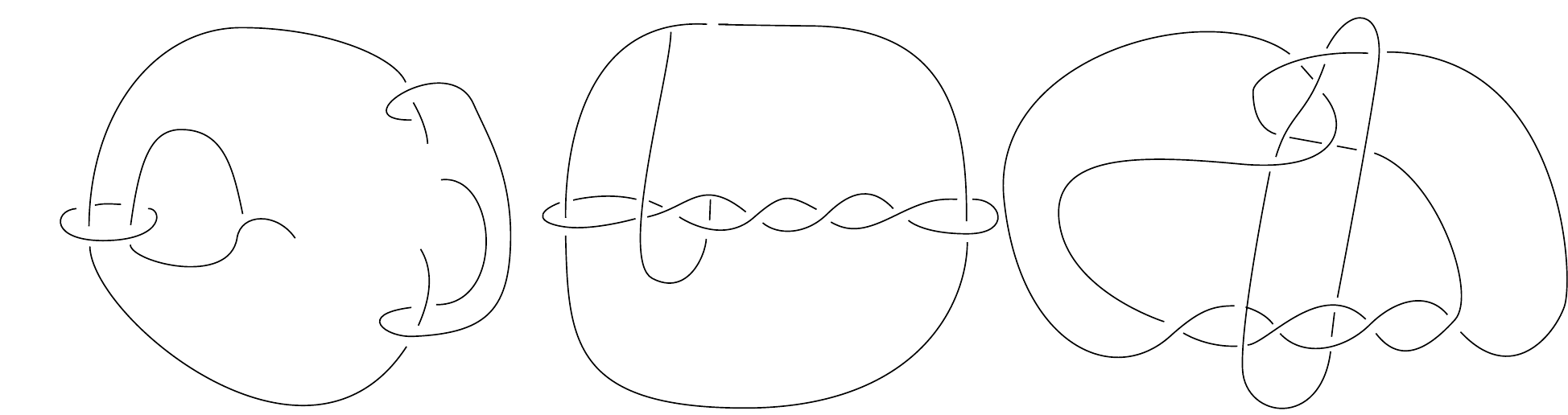
\caption{A surgery presentation of an OCMT}\label{graph}\label{stevedore}
\end{center}
\end{figure}
In the following lemma we show that there is no overcommuting manifold for translations of Heegaard genus 2. Together with our construction of such a manifold of Heegaard genus 3, this proves the minimality statement in Proposition \ref{oclaffine}.
\begin{lemma}
There is no overcommuting manifold for translations of Heegaard genus 2. 
\end{lemma}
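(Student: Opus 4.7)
I would argue by contradiction: assume $M$ is an overcommuting manifold for translations of Heegaard genus $2$, giving a representation $\rho:\pi_1(M)\to\aff(k[s,t])$ with $\rho(m)=x(s),\rho(l)=x(t)$, and let $\lambda:\pi_1(M)\to k^*$ denote its linear part. First I would reduce to the case where $M$ is irreducible with incompressible torus boundary: by additivity of Heegaard genus, a reducible $M$ of genus $2$ decomposes as $(S^1\times D^2)\#M_2$, which makes the meridian trivial in $\pi_1(M)$ and contradicts $\rho(m)=x(s)\neq 1$, while compressibility of $\partial M$ would force $M$ itself to be a solid torus of Heegaard genus $1$. After this reduction $M$ is aspherical and $\pi_1(M)$ is torsion-free. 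The Heegaard decomposition $M=(H_2\setminus T)\cup_\phi\overline{H}_2$ then yields via Van Kampen a one-relator presentation $\pi_1(M)=\langle u,v\mid r\rangle$ with $r=\phi(b_2)$: the potential extra relation $[\phi(a_1),\phi(b_1)]=1$ is a consequence of $\phi(b_2)=1$ via the surface identity $[\phi(a_1),\phi(b_1)][\phi(a_2),\phi(b_2)]=1$ holding in $F_2$. Since $\pi_1(M)$ is torsion-free, $r$ is not a proper power, so by Lyndon's identity theorem the presentation $2$-complex $X$ is aspherical, hence homotopy equivalent to $M$.

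The contradiction then comes by splitting on $\lambda$. If $\lambda$ is trivial, $\rho$ becomes a homomorphism into the additive group $k[s,t]$ sending $m,l$ to the $k$-linearly independent elements $s,t$; this forces $[m],[l]$ to span a rank $2$ subgroup of $H_1(M)$, contradicting the half-lives-half-dies principle, which gives the image of $H_1(\partial M)\to H_1(M)$ rank exactly $1$. If $\lambda$ is non-trivial, say $\lambda(u)\neq 1$, the cellular cochain complex $k\xrightarrow{\delta^0}k^2\xrightarrow{\delta^1}k$ of $X$ with coefficients in $k_\lambda$ has $\delta^0=(\lambda(u)-1,\lambda(v)-1)^T$ and $\delta^1=(\lambda(\partial r/\partial u),\lambda(\partial r/\partial v))$; then $\operatorname{im}\delta^0$ is one-dimensional and lies in $\ker\delta^1$ by the identity $\delta^1\delta^0=0$, so $\dim H^1(M,k_\lambda)\leq 1$. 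But criterion~(i) of Lemma~\ref{pd} requires $H^1(M,k_\lambda)\to H^1(\partial M,k_\lambda)=k^2$ to be surjective, forcing $\dim H^1(M,k_\lambda)\geq 2$, a contradiction.

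The main obstacle I anticipate is justifying the homotopy equivalence $X\simeq M$: it combines $3$-manifold topology (irreducibility plus incompressible boundary implies asphericity, and aspherical $3$-manifold groups are torsion-free) with Lyndon's identity theorem to conclude asphericity of the presentation complex. Once this is granted, the two ensuing cohomological arguments are both short.
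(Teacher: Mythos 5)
Your proof is correct but takes a genuinely different route from the paper's. The paper works on the homology side via Lemma~\ref{lambda}: it argues that the linear part $\lambda$ factors as $\ev_u\circ\phi$ for some $\phi:\pi_1(M)\to\Z$, observes that $H_1(H_2,\Lambda)\cong\Lambda$, and concludes from the exact sequence of the pair $(M,H_2)$ that the Alexander module $H_1(M,\Lambda)$ is cyclic, contradicting Lemma~\ref{lambda}. You instead work on the cohomology side, applying Lemma~\ref{pd}(i) directly to whatever $\lambda$ the representation provides and reading the bound $\dim H^1(M,k_\lambda)\le1$ off a one-relator presentation of $\pi_1(M)$. Your route has the advantage of not needing to show that $\lambda$ factors through $\Z$, and your explicit handling of the trivial-$\lambda$ case via half-lives-half-dies makes visible a step the paper only records as a remark in the introduction. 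On the other hand, the detour through irreducibility, torsion-freeness of $\pi_1(M)$, and Lyndon's identity theorem is unnecessary: a genus-2 Heegaard decomposition with torus boundary already exhibits $M$ as a genus-2 handlebody with a single 2-handle attached along a non-separating curve $\gamma\subset\Sigma_2$ (this is exactly how the paper opens its own proof), and such a manifold deformation retracts onto the presentation 2-complex $X$ of $\langle u,v\mid r\rangle$ with $r$ the word traced by $\gamma$ in $\pi_1(H_2)$. Thus $H^*(M,k_\lambda)\cong H^*(X,k_\lambda)$ holds for elementary reasons, with no appeal to asphericity of either $M$ or $X$, the reduction to the irreducible case can be dropped entirely, and your cochain computation goes through verbatim.
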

\begin{proof}
Suppose that $M$ is such a manifold. It is obtained by gluing a $2$-handle to a standard handlebody $H_2$ along a curve $\gamma\subset \Sigma_2=\partial H_2$. As $M$ has torus boundary, $\gamma$ is non-separating. As $\lambda$ is trivial on the boundary of $M$, it is trivial on $\Sigma_2\setminus \gamma$. We conclude that for $\delta$ in $\pi_1(\Sigma_2)$ we have $\lambda(\delta)=u^{\gamma\cdot\delta}$ where $\cdot$ denotes the algebraic intersection number and $u$ is in $k^*$. 

Consider then the map $\phi:H_1(\Sigma_2,\Z)\to \Z$ given by $\phi(\delta)=\gamma\cdot\delta$. By the preceding discussion, it should extend to $H_1(M,\Z)$ as a non trivial map - this means that $\gamma$ is in the kernel of the inclusion map $H_1(\Sigma_2,\Z)\to H_1(H_2,\Z)$. This allows to consider coefficients in $\Lambda$ and use the analysis of Lemma \ref{lambda}. 
As $\phi$ is non trivial, we have $H^0(M,\Lambda)=H^0(H_2,\Lambda)=0$. By retracting $H_2$ on a wedge of two circles, we observe that the homology of $H_2$ with coefficients in $\Lambda$ can be computed from a complex of the form $ \Lambda\overset{\partial}{\leftarrow} \Lambda^2$. The map $\partial $ is necessarily surjective as $H_0(H_2,\Lambda)=0$ and its kernel is isomorphic to $\Lambda$. That is, $H_1(H_2,\Lambda)$ is isomorphic to $\Lambda$. 

From the exact sequence of the pair $(M,H_2)$, we get that $H_1(M,\Lambda)$ is a quotient of $H_1(H_2,\Lambda)$, hence it is cyclic. This contradicts Lemma \ref{lambda} as $H_1(M,\Lambda)$ cannot be cyclic if $M$ were an OCMT. 
\end{proof}

\begin{remark}\label{uppersubgroup}
Assume~$\sqrt{2}\in k$. We can apply the results of this section to the subgroup of upper triangular matrices in $\SL_2(k)$ that are of the form~$\left(\begin{smallmatrix} \sqrt{2}^n & \sqrt{2}^{-n}t \\ 0 & \sqrt{2}^{-n}\end{smallmatrix}\right)$, for~$t\in k$ and~$n\in\Z$. 
This subgroup is isomorphic to the group of affine transformations of the the form $z\mapsto 2^nz+t$, for~$t\in k$ and~$n\in\Z$. The above example of an OCMT is constructed from the map $\lambda=\ev_2\circ \phi$, so after specializing the variables~$s$ and~$t$, the representation indeed takes values in the affine transformations of the form $z\mapsto 2^nz+t$, for~$t\in k$ and~$n\in\Z$. 
\end{remark}

\section{Effective overcommutation}\label{effectif}
\subsection{Contracting presentations}
\begin{definition}
Let $1\to R\to F\to G\to 1$ be a presentation of $G$ and fix a set $S=\{r_i, i\in I\}$ generating $R$ normally. 
\begin{enumerate}
\item If $x\in [F,F]$, we define $\cl(x)=\inf\{k\in \N, x=\prod_{i=1}^k [f_i,g_i]\}$, where $f_i,g_i\in F$. 
\item If $x\in R$, we define $l_S(x)=\inf\{k\in \N, x=\prod_{j=1}^k f_j r_{i_j}^{\pm 1}f_j^{-1}\}$, where $i_j\in I$, and $f_j\in F$.
\item If $x\in [F,R]$, we define $\cl_R(x)=\inf\{k\in \N, x=\prod_{i=1}^k [f_i, r_{i}]^{\pm 1}\}$, where $f_i\in F$ and $r_i\in R$.
\item We will say that the presentation is $(C,C')$-contracting if $H_2(G,\Z)=0$ and for any $x\in [F,R]=[F,F]\cap R$ we have 
$$\cl_R(x)\le C l_S(x)+C'\cl(x).$$
\end{enumerate}
\end{definition}
Notice that the quantity $l_S(x)$ is sometimes called the area of the relation $x$. 
\begin{example}\label{one-relator}
Let $G=\langle a_1,\ldots,a_n|r\rangle$ be a presentation such that the abelianization of $r$ is non-zero in $\Z^n$. Then it is $(1/2,0)$-contracting. 

Indeed, take $x\in [F,F]\cap R$ and write $x=\prod_{j=1}^{l_S(x)}f_j r^{\epsilon_j}f_j^{-1}$. As $x$ is a product of commutators, its abelianization vanishes. As $r\ne 0$, this implies $\sum_j \epsilon_j=0$ and we can write $l_S(x)=2k$. Consider two consecutive terms with opposite signs such as $frf^{-1}gr^{-1}g^{-1}$. We may write it $f[r,f^{-1}g]f^{-1}$. Repeating the argument, we still find subwords of the form $frf^{-1}xgr^{-1}g^{-1}$ where $x\in [F,R]$. We replace it with $f[r,f^{-1}xg]f^{-1}x$ which creates one more commutator.  This proves that $\cl_R(x)\le k$ as claimed.
\end{example}
We may consider the example of the torus knot of parameters $(p,q)$ where $p$ and $q$ are two coprime integers. Its presentation is $G=\langle a,b|r\rangle$ with $r=a^pb^{-q}$. The meridian is $m=a^ub^v$ where $qu+pv=1$ and a longitude is $a^p$. We compute $[m,l]=a^ub^v a^p b^{-v}a^{-u}a^{-p}=a^ub^vrb^{q-v}a^{-u-p}=a^ub^vrb^{-v}r^{-1}a^{-u}$. This shows $l_S([m,l])=2$ and $\cl_R([m,l])=1$. This is coherent with the fact that the tunnel number of the torus knot complement is 1, and hence its Heegaard genus is 2. 
 
\subsection{Steinberg group}
Let $k$ be a field containing $\frac{1}{6}$. We define the Steinberg group $\St_2(k)=F/R$ where $F$ is the free group generated by the symbols $x_\alpha(t)$ where $\alpha=\pm 1$ and $t\in k$. The subgroup $R$ is normally generated by $r^1_\alpha(s,t)$ and $r^2_\alpha(u,t)$ where 
\begin{eqnarray*}
r_\alpha^1(s,t)&=&x_\alpha(s+t)x_\alpha(s)^{-1}x_\alpha(t)^{-1},\quad s,t\in k\\
r_\alpha^2(u,t)&=&w_\alpha(u)x_\alpha(t)w_\alpha(u)^{-1}x_{-\alpha}(u^{-2}t),\quad  u\in k^*, t\in k
\end{eqnarray*}
and where we have set $w_\alpha(u)=x_\alpha(u)x_{-\alpha}(-u^{-1})x_\alpha(u)$. We will denote below by $S$ the set of all Steinberg relations. 
The main property of this group is that it is the universal central extension of $\SL_2(k)$. Explicitly, the map $\pi:\St_2(k)\to \SL_2(k)$ defined by 
$$\pi(x_1(t))=\begin{pmatrix} 1 & t \\ 0 & 1\end{pmatrix}\text{ and }\pi(x_{-1}(t))=\begin{pmatrix} 1 & 0 \\ t & 1\end{pmatrix}$$
is surjective, its kernel is central and $H_1(\St_2(k),\Z)=H_2(\St_2(k),\Z)=0$. 

Let us recall the statement of Theorem \ref{T2} that we aim to prove now.
\begin{theorem*}
If $\frac{\sqrt{2}}{6}\in k$, the presentation of the Steinberg group is $(5,2)$-contracting. 
\end{theorem*}
Before starting the proof, we collect some well-known facts about the Steinberg group, see \cite{steinberg,milnor}. 

\begin{proposition}\label{calculs}
Set $h_\alpha(u)=w_\alpha(u)w_\alpha(1)^{-1}$ and for any $u,v\in k^*$, $c(u,v)=h_\alpha(uv)h_\alpha(u)^{-1}h_\alpha(v)^{-1}$. The following identities hold in $\St_2(k)$:
\begin{enumerate}
\item $w_\alpha(u)=w_\alpha(-u)^{-1}=w_{-\alpha}(-u^{-1})$
\item $h_\alpha(u)x_\alpha(t)h_\alpha(u)^{-1}=x_\alpha(u^2t)$
\item $h_\alpha(t)=h_{-\alpha}(t)^{-1}$
\item $w_\alpha(u)w_{-\alpha}(v)w_\alpha(u)^{-1}=w_{\alpha}(-u^2t)$
\item $c(u,v)$ is central in $\St_2(k)$. 
\end{enumerate}
\end{proposition}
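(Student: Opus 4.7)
The plan is to verify each of the five identities by direct manipulation of the defining relations $r_\alpha^1$ and $r_\alpha^2$ of the Steinberg group; all five are classical (see \cite{steinberg,milnor}) and I would prove them in the listed order, since each feeds into the next.

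For (1), I first observe that $r_\alpha^1(0,0)$ forces $x_\alpha(0)=1$, hence $x_\alpha(-t)=x_\alpha(t)^{-1}$ from $r_\alpha^1(t,-t)$. Expanding $w_\alpha(u)w_\alpha(-u)$ from the definition and collapsing the two adjacent pairs $x_\alpha(u)x_\alpha(-u)$ and $x_{-\alpha}(-u^{-1})x_{-\alpha}(u^{-1})$ yields $w_\alpha(u)=w_\alpha(-u)^{-1}$. The equality $w_\alpha(u)=w_{-\alpha}(-u^{-1})$ then comes from a rewriting of the defining triple after inserting $x_{-\alpha}(u^{-1})x_{-\alpha}(-u^{-1})=1$ on both ends and using $r_{-\alpha}^2$ to move the central $x_\alpha(u)$ into the canonical $w_{-\alpha}$ shape. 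Identity (2) follows by applying $r_\alpha^2$ twice (once with parameter $1$, once with parameter $u$) to the expression $w_\alpha(u)w_\alpha(1)^{-1}x_\alpha(t)w_\alpha(1)w_\alpha(u)^{-1}$. Identity (3) is then immediate from the definition of $h_\alpha$ together with (1). Identity (4) follows from (2) by conjugating each of the three factors of $w_{-\alpha}(v)=x_{-\alpha}(v)x_\alpha(-v^{-1})x_{-\alpha}(v)$ by $w_\alpha(u)$, obtaining $w_{-\alpha}(-u^{-2}v)$ on the right-hand side (the statement as printed appears to contain a minor typo in the parameters).

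The one item that requires genuine care is (5), the centrality of the symbol $c(u,v)$. The standard strategy is to show that $c(u,v)$ commutes with each generator $x_{\pm 1}(t)$. By (2), conjugation by $h_\alpha(w)$ rescales $x_\alpha(t)$ to $x_\alpha(w^2 t)$, so conjugation by $h_\alpha(uv)h_\alpha(u)^{-1}h_\alpha(v)^{-1}$ multiplies the parameter $t$ by the scalar $(uv)^2 u^{-2}v^{-2}=1$; the analogous statement for $x_{-\alpha}(t)$ is obtained from (3) and (2) combined. Since $c(u,v)$ centralizes all generators, it lies in the center of $\St_2(k)$. The main obstacle is the sign-and-exponent bookkeeping in $r_\alpha^2$: one must check that the three successive conjugations compose cleanly without leaving a parasitic scalar factor, which is precisely the classical Matsumoto--Steinberg cocycle computation and the delicate point of the proof.
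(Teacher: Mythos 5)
The paper does not actually prove this proposition at all: it states it as a collection of well-known facts and points the reader to \cite{steinberg,milnor}. So your explicit verification is a genuinely different (and more self-contained) route than the paper's. Your treatment of (1), (2), and (5) follows the standard Milnor argument and is essentially correct, with (5) in particular being the clean and classical ``conjugation scales the parameter by $(uv)^2u^{-2}v^{-2}=1$'' argument.

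However, there are two concrete problems. First, in (4) your claimed output $w_{-\alpha}(-u^{-2}v)$ has the wrong parameter. Conjugating each factor of $w_{-\alpha}(v)=x_{-\alpha}(v)x_\alpha(-v^{-1})x_{-\alpha}(v)$ by $w_\alpha(u)$ (converting to $w_{-\alpha}(-u^{-1})$ via (1) so that the relation $r_{-\alpha}^2$ applies) gives $x_\alpha(-u^2v)\,x_{-\alpha}(u^{-2}v^{-1})\,x_\alpha(-u^2v)=w_\alpha(-u^2v)$. The printed statement's $t$ should indeed be $v$, but the correct right-hand side is $w_\alpha(-u^2v)$, equivalently $w_{-\alpha}(u^{-2}v^{-1})$, not $w_{-\alpha}(-u^{-2}v)$; you can check this in $\SL_2$ where $w_\alpha(u)w_{-\alpha}(v)w_\alpha(u)^{-1}=\left(\begin{smallmatrix}0 & -u^2v \\ u^{-2}v^{-1} & 0\end{smallmatrix}\right)=\pi(w_\alpha(-u^2v))$. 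Second, your claim that (3) is ``immediate from the definition of $h_\alpha$ together with (1)'' is a real gap. Applying (1) to $h_\alpha(t)=w_\alpha(t)w_\alpha(1)^{-1}$ yields $w_{-\alpha}(-t^{-1})w_{-\alpha}(1)$, while $h_{-\alpha}(t)^{-1}=w_{-\alpha}(1)w_{-\alpha}(-t)$; these two words are not visibly equal, and forcing the equality amounts to the conjugation identity (4) with appropriate parameters. So the logical order should be (1), (2), (4), and only then (3). With these two fixes the argument is sound, but as written (3) and (4) cannot stand.
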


\subsection{Proof of Theorem \ref{T2}}
Fix $a$ an integer which is distinct from $0,1,-1$ in $k$. By equation (2) of Proposition \ref{calculs}, we have for all $\alpha\in \{\pm 1\}$ and $t\in k$ the equality
\begin{equation}\label{commutator}
x_\alpha(t)=[h_\alpha(a),x_\alpha(t/(a^2-1))].
\end{equation}

This shows that $H_1(\St_2(k),\Z)$ vanishes and is a key ingredient in the proof by the following argument. 
Let $\psi:F\to [F,F]$ be the morphism mapping $x_\alpha(t)$ to $[h_\alpha(a),x_\alpha(t/(a^2-1))]$ and suppose that we found a constant $C>0$ such that for all $r\in S$, $\psi(r)\in [F,R]$ and $\cl_R(\psi(r))\le C$. 

Then we pick $x=\prod_{i=1}^k[f_i,g_i]=\prod_{j=1}^lh_j r_{j}^{\pm 1}h_j^{-1}\in [F,F]\cap R$ where $f_i,g_i,h_j\in F$ and $r_j\in S$ so that $k=\cl(x)$ and $l=l_S(x)$. Applying $\psi$ we get on one hand $\psi(x)=\prod_{j=1}^l\psi(h_j) \psi(r_{j})^{\pm 1}\psi(h_j)^{-1}$ and hence $\cl_R(\psi(x))\le Cl_S(x).$

On the other hand, we can bound $\cl_R(x)$ in terms of $\cl_R(\psi(x))$ in the following way.
Write $\psi(x)=\prod_{i=1}^k [\psi(f_i),\psi(g_i)]$ and observe that for all $g\in F$ one has $\psi(g)=gr(g)$ for some $r(g)\in R$ by equation \eqref{commutator}. The formula of Remark \ref{biendef} shows that $[\psi(g),\psi(h)]=\xi [g,h]\xi'$ where $\xi$ and $\xi'$ denote single commutators in $[F,R]$.
As for any $x,f\in F$ and $r\in R$ one has $x[f,r]=[xfx^{-1},xrx^{-1}]x$, one can also write $[\psi(g),\psi(h)]=\xi \xi'' [g,h]$. 
Applying this to each factor produces $2k$ commutators which can be moved by applying the above trick. This implies that $\psi(x)x^{-1}$ can be written using $2k$ commutators in $[F,R]$ and the conclusion follows. Observe that we obtained along the way that $H_2(\St_2(k),\Z)=0$. 

We get finally $\cl_R(x)\le \cl_R(\psi(x))+2k\le Cl_S(x)+2\cl(x)$. Hence, the presentation is $(C,2)$-contracting. In order to prove the theorem it remains to show the existence of $C$. We show below that we can take $C=5$. 

\subsubsection{First Steinberg relation} 

To save space, we write $h=h_\alpha(a), t'=t/(a^2-1),s'=s/(a^2-1)$. We have by definition
$$\psi( r_\alpha^1(s,t))=[h,x_\alpha(s'+t')][h,x_\alpha(s')]^{-1}[h,x_\alpha(t')]^{-1}.$$
In the sequel, $\xi$ denotes an arbitrary single commutator in $[F,R]$: for instance $\xi^2$ is an arbitrary product of two commutators. 
As $x_\alpha(s'+t')=x_\alpha(s')x_\alpha(t')$ modulo $R$, we can write $\psi(x_\alpha(s+t))=[h,x_\alpha(s')x_\alpha(t')]\xi=\xi[h,x_\alpha(s')x_\alpha(t')]$. Hence, 
\begin{eqnarray*}
\psi(r_\alpha^1(s,t))&=&\xi[h,x_\alpha(s')]x_\alpha(s')[h,x_\alpha(t')]x_\alpha(s')^{-1}[h,x_\alpha(s')]^{-1}[h,x_\alpha(t')]^{-1}\\ 
&=&\xi\Big [[h,x_\alpha(s')]x_\alpha(s'),[h,x_\alpha(t')]\Big]=\xi^3\Big[x_\alpha(\frac{a^2}{a^2-1}s),x_\alpha(t)\Big],
\end{eqnarray*}
where in the last equality, we used Equation \eqref{commutator} in both arguments of the commutator. 
Using Lemma~\ref{steinbergtranslations} below, we finally get
$$\cl_R(\psi(r_\alpha^1(s,t)))\le 5.$$

\begin{lemma}\label{steinbergtranslations}
$[x_\alpha(s),x_\alpha(t)]=\xi^2$ for all~$s,t\in k$. 
\end{lemma}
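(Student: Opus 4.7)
The plan is to show $\cl_R([x_\alpha(s),x_\alpha(t)]) \le 2$. I would begin by observing the direct identity
$$[x_\alpha(s),x_\alpha(t)] = r_\alpha^1(t,s)^{-1}\, r_\alpha^1(s,t)$$
in $F$, which follows by writing the Steinberg addition relation in both orderings: $x_\alpha(s+t) = r_\alpha^1(s,t)\, x_\alpha(t) x_\alpha(s) = r_\alpha^1(t,s)\, x_\alpha(s) x_\alpha(t)$. This already shows $l_S \le 2$, but not yet $\cl_R \le 2$: in the subgroup of $F$ generated only by the three letters $x_\alpha(s), x_\alpha(t), x_\alpha(s+t)$ with only these two relators, the quotient is $\Z^2$ and $H_2(\Z^2,\Z) = \Z$, so $[x_\alpha(s),x_\alpha(t)]$ represents a nonzero class there. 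Promoting the bound to $\cl_R \le 2$ therefore requires bringing in further Steinberg relations.

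The key additional ingredient is Equation~\eqref{commutator} specialised to $a = \sqrt{2}$, for which $a^2-1=1$: it reads $x_\alpha(t) \equiv [h_\alpha(\sqrt{2}), x_\alpha(t)] \pmod{R}$. Setting $h := h_\alpha(\sqrt{2})$ we therefore have explicit decompositions $x_\alpha(s) = [h, x_\alpha(s)]\cdot \mu_s^{-1}$ and $x_\alpha(t) = [h, x_\alpha(t)]\cdot \mu_t^{-1}$ with $\mu_s, \mu_t \in R$ (detected via Proposition~\ref{calculs}(2) and the identity $h x_\alpha(t) h^{-1} \equiv x_\alpha(t)^2 \pmod R$). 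Substituting these into the commutator and expanding via the standard identity $[AB,CD] = A[B,CD]A^{-1}\,[A,CD]$, every sub-commutator containing one of $\mu_s, \mu_t$ becomes, up to conjugation, a single commutator $[f,r]$ with $r \in R$; the residual term is the double commutator $[[h, x_\alpha(s)], [h, x_\alpha(t)]]$. I would then re-expand this residual using $[h,y] = \mu_s y$, which reproduces $[x_\alpha(s), x_\alpha(t)]$ plus further single commutators in $[F, R]$, giving a self-referential equation of the form $A = \xi^p \cdot A \cdot \xi^q$ (with $A = [x_\alpha(s), x_\alpha(t)]$) from which $A = \xi^2$ is extracted after careful cancellation.

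The main obstacle is the bookkeeping: a naive unwinding gives three or four corrections plus a recursive residual, and getting down to exactly two commutators requires grouping the expansions optimally (possibly using a Hall–Witt-type identity applied to $(h, x_\alpha(s), x_\alpha(t))$) so that corrections combine or cancel against the self-referential term. An alternative more conceptual route is topological: by Remark~\ref{uppersubgroup}, the overcommuting manifold of Heegaard genus~$3$ constructed in Section~4 takes values in the split upper-triangular subgroup $\bigl\{\bigl(\begin{smallmatrix} \sqrt{2}^n & \sqrt{2}^{-n}t \\ 0 & \sqrt{2}^{-n}\end{smallmatrix}\bigr)\bigr\}$ of $\SL_2(k)$; lifting it to $\St_2(k)$ (the obstruction to which vanishes because this representation factors through a metabelian split subgroup where the relevant $K_2$-obstruction is trivial) produces an OCM of Heegaard genus~$3$ for the pair $(x_\alpha(s), x_\alpha(t))$, and Theorem~\ref{T1} then yields $\ocl(x_\alpha(s), x_\alpha(t)) \le 2$, which is exactly the claim $\cl_R \le 2$.
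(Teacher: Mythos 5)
Your ``alternative more conceptual route'' is essentially the paper's own proof. The paper restricts to~$\alpha=1$, exhibits the subset~$U=\{x_1(t)h_1(\sqrt 2)^n : t\in k,\ n\in\Z\}$ of~$\St_2(k)$, checks via Proposition~\ref{calculs}(2) that~$U$ is a subgroup and that~$\pi$ restricts to an isomorphism of~$U$ onto the group of matrices~$\left(\begin{smallmatrix}\sqrt 2^n & \sqrt 2^{-n}t\\ 0&\sqrt 2^{-n}\end{smallmatrix}\right)$, and then invokes the genus-$3$ OCMT of Section~\ref{OCMT} together with Remark~\ref{uppersubgroup} and Theorem~\ref{construction}. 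No obstruction theory enters: the lift exists simply because~$(\pi|_U)^{-1}$ is defined, and composing the OCMT's representation with it sends the boundary generators precisely to~$x_1(s)$ and~$x_1(t)$. Your appeal to the vanishing of a ``$K_2$-obstruction'' is therefore heavier and vaguer than what is needed (one would still have to identify the obstruction group and explain why it vanishes); what the paper actually uses is the explicit splitting~$U$ of~$\pi$ over the upper-triangular subgroup, which is cleaner and is what you should keep in mind.

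Regarding your first, purely algebraic attempt: you were right to abandon it. The identity~$[x_\alpha(s),x_\alpha(t)] = r^1_\alpha(t,s)^{-1}r^1_\alpha(s,t)$ only gives~$l_S\le 2$, and your~$H_2(\Z^2,\Z)=\Z$ sanity check correctly shows that more of the presentation must be used to reach~$\cl_R\le 2$. The paper does not pursue this route at all; a direct manipulation along the lines you sketch tends to stall at three or four commutators, and the topological route is what yields the sharp bound.
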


\begin{proof}
It suffices to give a proof for~$\alpha=1$.
Let~$U$ be the subset of~$\St_2(k)$ that consists of all the elements of the form~$x_1(t)h_1(\sqrt 2)^n,$ where~$t\in k$ and~$n\in \Z$. 
Using equation (2) of Proposition~\ref{calculs}, one can directly see that~$U$ is a subgroup of~$\St_2(k)$. Furthermore, the projection 
homomorphism~$\pi: \St_2(k)\to \SL_2(k)$ sends the element~$x_1(t)h_1(\sqrt 2)^n$ 
to the matrix~$\left(\begin{smallmatrix} \sqrt{2}^n & \sqrt{2}^{-n}t \\ 0 & \sqrt{2}^{-n}\end{smallmatrix}\right)$. It is apparent that~$\pi|_U$ is an isomorphism 
onto its image, so~$U$ is isomorphic to the subgroup of~$\SL_2(k)$ consisting of all matrices 
of the form~$\left(\begin{smallmatrix} \sqrt{2}^n & \sqrt{2}^{-n}t \\ 0 & \sqrt{2}^{-n}\end{smallmatrix}\right)$, for~$t\in k$ and~$n\in\Z$. 
We now obtain the result by using our example of an OCMT of Heegaard genus~$3$ from Section~\ref{OCMT}, compare with Remark~\ref{uppersubgroup}. 
\end{proof}

\subsubsection{Second Steinberg relation}\label{ssr}

We compute 
$$\psi(r_\alpha^2(u,t))=\psi(w_\alpha(u))[h_\alpha(a),x_\alpha(t')]\psi(w_\alpha(u))^{-1}[h_{-\alpha}(a),x_{-\alpha}(u^{-2}t')].$$
We can insert the conjugation by $\psi(w_\alpha(u))$ inside the commutator and observe that we have  $\psi(w_\alpha(u))=w_\alpha(u)$ modulo $R$. Moreover, 
\begin{eqnarray*}
w_\alpha(u)h_\alpha(a)w_\alpha(u)^{-1}&\overset{(1)}{=}&w_{-\alpha}(-u^{-1})w_{\alpha}(a)w_\alpha(-1)w_{-\alpha}(-u^{-1})^{-1}\\
&\overset{(4)}{=}&w_{-\alpha}(-u^{-2}a)w_{-\alpha}(u^{-2})\!=\!h_{-\alpha}(-u^{-2}a)h_{-\alpha}(-u^{-2})^{-1}\\
&\overset{(5)}{=}&h_{-\alpha}(a)\mod Z(\St_2(k)).
\end{eqnarray*}
Here, we wrote above the equal signs the the equations of Proposition~\ref{calculs} that we used. 
On the other hand we have in $\St_2(k)$: $w_\alpha(u)x_\alpha(t')w_\alpha(u)^{-1}=x_{-\alpha}(-u^{-2}t')$. Writing $h=h_{-\alpha}(a)$, we have for some $r\in R$ and $c\in F$ mapping into $Z(\St_2(k))$: 
\begin{eqnarray*}
\psi(r_\alpha^2(u,t))&=&[hc,x_{-\alpha}(-u^{-2}t')r][h,x_{-\alpha}(u^{-2}t')]\\
&=&h[c,x_{-\alpha}(-u^{-2}t')r]h^{-1}[h,x_{-\alpha}(-u^{-2}t')r][h,x_{-\alpha}(u^{-2}t')].
\end{eqnarray*}
We first treat the term $[c,x_{-\alpha}(-u^{-2}t')r]=[c,[h,x_{-\alpha}(-u^{-2}t'')]]\xi$. 
From the Hall-Witt identity, we have for any $x,y\in F$: 
$$[[x,y],x^{-1}cx][[c,x],c^{-1}yc][[y,c],y^{-1}xy]=1.$$
As $c$ maps to the center, we have $[c,x],[y,c]\in R$ and $x^{-1}cx=c\mod R$. This proves $[[x,y],c]=\xi^3$. 

The remaining term gives $[h,x_{-\alpha}(-u^{-2}t')r][h,x_{-\alpha}(u^{-2}t')]=\xi[h,x^{-1}][h,x]$ where $x=x_{-\alpha}(u^{-2}t')$. 
As $1=[h,x^{-1}x]=[h,x^{-1}]x^{-1}[h,x]x,$ we get $[h,x^{-1}][h,x]=[x^{-1},[h,x]^{-1}]$. But modulo $R$ we have $[h,x]=x_{-\alpha}(u^{-2}t)=x^{a^2-1}$. 
This finally gives $[h,x^{-1}][h,x]=\xi$ and $\cl_R(\psi(r_\alpha^2(u,t)))\le 5$.

\subsection{Application to Ghys' example}\label{ghys}

Take $k=\Q(\sqrt{2},u)$ and set $v=1-u$. It is a well-known consequence of the Steinberg relation $c(u,v)=0$ that the elements $h_\alpha(u)$ and $h_\alpha(v)$ commute in $\St_2(k)$. We follow the proof of Lemma 9.8 in \cite{milnor} by keeping track of the Steinberg relations that were used. Replacing $h_\alpha(u)$ by the equivalent $\eta_\alpha(u)=w_\alpha(u)w_{-\alpha}(1)$, this gives 
$$[\eta_\alpha(u),\eta_\alpha(v)]=R_1(u,v)w_\alpha(-uv)^{-1}R_2(u,v)R_2(v,u)^{-1}w_\alpha(-uv)R_1(v,u)^{-1}$$
where $r^3_\alpha(u,t)=w_\alpha(u)x_\alpha(t)w_\alpha(-u)x_{-\alpha}(u^{-2}t)$,

\begin{eqnarray*}
R_1(u,v)&=&x_\alpha(-uv)^{-1}r^1_\alpha(u^2,-u)x_{-\alpha}(u^{-1})^{-1}x_\alpha(-u)^{-1}r^3_\alpha(-u,u^2)\\
&&r^1_\alpha(-v,-u)x_\alpha(-u)x_{-\alpha}(u^{-1})x_\alpha(-uv)\text{ and}\\ 
R_2(u,v)&=&x_\alpha(-uv)r^1_{-\alpha}(u^{-1},v^{-1}) x_\alpha(-uv)^{-1}r^1_\alpha(-v,v^2)w_\alpha(v)^{-1}\\
&&r^3_\alpha(v,v^2)w_\alpha(v).
\end{eqnarray*}
This computation shows that the commutation of $\eta_\alpha(u)$ and $\eta_\alpha(v)$ uses 12 relations and Theorem \ref{T2} gives that $\ocl(\eta_\alpha(u),\eta_\alpha(v))$ is at most 62 and hence the Heegaard genus of Ghys' example is at most 63. 

One can also proceed in the following more clever way. Denoting $\eta_\alpha'(u)=R_1(u,v)^{-1}\eta_\alpha(u)R_1(u,v)$ we have 
\begin{eqnarray*}
[\eta'_\alpha(u),\eta'_\alpha(v)]&=& w_\alpha(-uv)^{-1}R_2(u,v)R_2(v,u)^{-1}w_\alpha(-uv)R_1(v,u)^{-1}R_1(u,v)\\
&=&\xi R_2(u,v)R_2(v,u)^{-1}R_1(v,u)^{-1}R_1(u,v).
\end{eqnarray*}
On one hand $R_1(v,u)^{-1}R_1(u,v)$ is equal to 
$$\xi^3r^1_\alpha(-u,-v)^{-1}r^3_\alpha(-v,v^2)^{-1}r^1_\alpha(v^2,-v)^{-1}r^1_\alpha(u^2,-u)r^3_\alpha(-u,u^2)r^1_\alpha(-v,-u)$$
and $R_2(u,v)R_2(v,u)^{-1}$ is equal to 
$$\xi^2 r^4_{\alpha}(u,v)r^1_\alpha(-v,v^2)r^3_\alpha(v,v^2)r_\alpha^3(u,u^2)^{-1}r^1_\alpha(-u,u^2)^{-1}r^4_\alpha(v,u)^{-1},$$
where we have set $r^4_\alpha(u,v)=x_\alpha(-uv)r^1_{-\alpha}(u^{-1},v^{-1})x_\alpha(-uv)^{-1}$. 

We observe that relations come in pairs in this expression: we have for instance $r^1_\alpha(t,s)^{-1}r^1_\alpha(s,t)=[x_\alpha(s),x_\alpha(t)]=\xi^2$. In the same way, 
\begin{eqnarray*}
r^3_\alpha(s,t)r^3_\alpha(-s,t)^{-1}&=&w_\alpha(s)x_\alpha(t)w_\alpha(-s)w_\alpha(s)^{-1}x_\alpha(t)^{-1}w_\alpha(-s)^{-1}\\
&=& w_\alpha(s)[x_\alpha(t),w_\alpha(-s)w_\alpha(s)^{-1}][w_\alpha(-s),w_\alpha(s)^{-1}]w_\alpha(s)^{-1}.
\end{eqnarray*}
As $w_\alpha(-s)=w_\alpha(s)^{-1}$ modulo $R$, the second commutator can be replaced by $\xi$. We observe moreover that $w_\alpha(-s)w_\alpha(s)^{-1}$ maps to the center of $\St_2(k)$, hence by the same argument as in Subsection \ref{ssr}, we find $r^3_\alpha(s,t)r^3_\alpha(-s,t)^{-1}=\xi^5$. 
Finally, $[\eta'_\alpha(u),\eta'_\alpha(v)]=\xi^{24}R'_2R'_1$ where $R'_2R'_1$ is an expression using $6$ elements of $R$ once and their inverse. By moving the 4 middle terms in $R_2$ in the middle of $R_1$ we get directly $R'_2R'_1=\xi$ and $\ocl(h_\alpha(u),h_\alpha(v))\le 25$. 

\begin{remark}
We may generalise Ghys' example to any $A$-polynomial in the following way. Suppose that $P\in\overline{\Q}[x,y]$ is an irreducible polynomial and set $k=\operatorname{Frac}(\overline{\Q}[x,y]/(P))$. By Matsumoto's theorem, the elements $h_\alpha(x)$ and $h_\alpha(y)$ commute in $\St_2(k)$ (or their images in $\SL_2(k)$ overcommute) if and only if their commutator can be written in terms of the five Matsumoto's relations given for instance in \cite{Hutchinson}, Proposition 3.5. Among them, the most complicated is $c(u,(1-u)v)c(u,v)^{-1}$ which for $v=1$ is the one we dealt with in this section. 
Hence, in the general case, we can bound the complexity of a manifold $M$ such that $P$ divides its $A$-polynomial by the number of Matsumoto's relations, but we do not make it explicit here. 
\end{remark}

We have discussed how Ghys manifolds provide a topological reason for the Steinberg relation. 
Let us end this article by a discussion of other properties of Ghys manifolds, starting with a formal definition.

\begin{definition}
A Ghys manifold $M$ is an irreducible compact oriented 3-manifold with $\partial M=S^1\times S^1$ together with a representation $\rho:\pi_1(M)\to \SL_2(\Q(x))$ such that 
$$\rho(m)=\begin{pmatrix} x & 0 \\ 0 & x^{-1} \end{pmatrix}\text{ and }\rho(l)=\begin{pmatrix} (1-x) & 0 \\ 0 & (1-x)^{-1} \end{pmatrix}.$$
\end{definition}
Notice that it is easy to satisfy the irreducibility condition by removing prime factors of a non-irreducible example. 
\begin{proposition}
Suppose that $M$ is a Ghys manifold $M$. Then
\begin{enumerate}
\item If $M$ does not contain closed incompressible surfaces, no Dehn filling of $M$ gives $S^3$.
\item If $M$ is hyperbolic, the character variety of $M$ has at least three components.
\end{enumerate}
\end{proposition}
\begin{proof}
(1) By standard Culler-Shalen theory (see \cite{shalen}), the condition that $M$ does not contain closed irreducible surfaces implies that the restriction map $r:X(M)\to X(\partial M)$ is proper. In particular, its image is a Zariski-closed subset. 

Let $(p,q)$ be the slope of the surgery producing $S^3$, assuming that $p$ and $q$ are coprime. We have then $M=S^3\setminus V(K)$ where $V(K)$ denotes a tubular neighborhood of some knot $K$ in $S^3$. 
Let $(x,y)$ be a solution of the system $x+y=1$ and $x^py^q=1$. As $r$ is proper, there exists a representation $\rho:\pi_1(M)\to \SL_2(\overline{\Q})$ such that $\rho(m)$ and $\rho(l)$ have eigenvalues $(x,x^{-1})$ and $(y,y^{-1})$ respectively. If $x\ne \pm 1$ or $y\ne \pm 1$, we can suppose that $\rho(m)$ and $\rho(l)$ are diagonal. In particular, the curve $\gamma$ with slope $(p,q)$ in the boundary of $M$ satisfies $\rho(\gamma)=1$. 
This means that $\rho$ extends to $S^3$ and hence is trivial, contradicting the fact that $x\ne \pm 1$ or $y\ne \pm 1$. 

We observe that the equation $x+y=1$ forbids $x=\pm 1$ and $y=\pm 1$, hence one of the coordinates has to vanish, which forces $p=0$ or $q=0$. 
Let us suppose then by symmetry that $p=0$ and $q=1$: this means that the variable $x$ corresponds to the meridian of $K$ and $y$ to the longitude. 

Let $A(x,y)$ be the $A$-polynomial of $M$ and suppose that $A(-1,y)=0$ with $y\notin\{-1,0,1\}$. Again as $r$ is proper, there exists a representation $\rho:\pi_1(M)\to \SL_2(\overline{\Q})$ such that $\rho(l)$ is diagonal with entries $y,y^{-1}$. As $\rho(m)$ has eigenvalues $-1$ and commutes with $\rho(l)$, $\rho(m)=-\id$. As $m$  normally generates $\pi_1(M)$, this implies that $\rho$ is central, contradicting the assumption $x\ne \pm 1$. This gives $A(-1,y)\ne 0$. But as $x+y-1$ divides $A(x,y)$, we have $A(-1,2)=0$, which contradicts what we have just shown. This proves the first point of the proposition.

(2) If $M$ is hyperbolic, any representation $\rho:\pi_1(M)\to \SL_2(\overline{\Q})$ which lifts the holonomy representation is parabolic at the boundary, meaning that its parameters $x,y$ satisfy $x=\pm 1, y=\pm 1$. For any such $(x,y)$ we have $x+y\ne 1$: this implies that the component of $X(M)$ where this representation lives is distinct from the one projecting to the curve $x+y=1$. Recalling that there is also a curve of abelian representations, we just have proved that there are at least three disctinct components. 
\end{proof}

When writing this article, we noticed that the fundamental group of a Ghys manifold acts on a CAT(0) cube complex by reproducing the construction of Bass-Serre tree with three valuations (corresponding to $0,1,\infty$) instead of one. We hope to address this question in a forthcoming publication.

\end{document}